\providecommand{\tabularnewline}{\\}
\theoremstyle{plain}
\newtheorem{thm}{\protect\theoremname}
  \theoremstyle{plain}
  \newtheorem{cor}[thm]{\protect\corollaryname}
  \theoremstyle{plain}
  \newtheorem{prop}[thm]{\protect\propositionname}
\DeclareMathOperator{\Mat}{Mat}
\DeclareMathOperator{\occ}{occ}
\DeclareMathOperator{\mk}{mk}
\DeclareMathOperator{\asc}{asc}
\DeclareMathOperator{\plt}{plt}
\DeclareMathOperator{\Fib}{Fib}
\DeclareMathOperator{\pk}{pk}
\DeclareMathOperator{\val}{val}
\tikzstyle{pathdefault}=[draw, line width=1, solid, color=black]
\tikzstyle{nodedefault}=[circle, inner sep=1.5, fill=black]
\tikzstyle{empty}=[]
\tikzstyle{nodeellipsis}=[circle, inner sep=0.5, fill=black]
\tikzstyle{pathcolor1}=[draw, line width=1.3, densely dashed, color=red]
\tikzstyle{pathcolor2}=[draw, line width=1.6, densely dotted, color=blue]
\tikzstyle{pathcolorlight}=[draw, line width=1, dotted, color=lightgray]
\tikzstyle{arbpathcolor0}=[line width=1, dashdotted, color=black]
\tikzstyle{arbpathcolor1}=[line width=1, densely dashed, color=red]
\tikzstyle{arbpathdefault}=[line width=1, densely dotted, color=blue]
\newcounter{id}
\newcommand{\drawlinedotswithstyle}[4]{
 \def\x{{#3}}
 \def\y{{#4}}
 \tikzstyle{thispathstyle}=[#1]
 \tikzstyle{thisnodestyle}=[#2]
 \setcounter{id}{-1} 
 \foreach \j in {#3}{\stepcounter{id}} 
 \foreach \i in {1,...,\the\value{id}}{  
  \path[thispathstyle] (\x[\i],\y[\i]) --(\x[\i-1],\y[\i-1]); 
 }
 \foreach \i in {1,...,\the\value{id}}{  
  \node[thisnodestyle] at (\x[\i],\y[\i]) {}; 
 }
 \node[thisnodestyle] at (\x[0],\y[0]) {}; 
}
\DeclareDocumentCommand{\drawlinedots}{ O{pathdefault} O{nodedefault} m m}{\drawlinedotswithstyle{#1}{#2}{#3}{#4}}
\let\originalleft\left
\let\originalright\right
\renewcommand{\left}{\mathopen{}\mathclose\bgroup\originalleft}
\renewcommand{\right}{\aftergroup\egroup\originalright}
  \providecommand{\corollaryname}{Corollary}
  \providecommand{\propositionname}{Proposition}
\providecommand{\theoremname}{Theorem}
\begin{document}
\global\long\def\mk{\operatorname{mk}}
\global\long\def\occ{\operatorname{occ}}
\global\long\def\Mat{\operatorname{Mat}}
\global\long\def\asc{\operatorname{asc}}
\global\long\def\plt{\operatorname{plt}}
\global\long\def\Fib{\operatorname{Fib}}
\global\long\def\pk{\operatorname{pk}}
\global\long\def\val{\operatorname{val}}

\title{A generalized Goulden\textendash Jackson cluster method and lattice
path enumeration}

\author{Yan Zhuang\\
Department of Mathematics\\
Brandeis University\\
\texttt{\href{mailto:zhuangy@brandeis.edu}{zhuangy@brandeis.edu}}}
\maketitle
\begin{abstract}
The Goulden\textendash Jackson cluster method is a powerful tool for
obtaining generating functions counting words in a free monoid by
occurrences of a set of subwords. We introduce a generalization of
the cluster method for monoid networks, which generalize the combinatorial
framework of free monoids. As a sample application of the generalized
cluster method, we compute bivariate and multivariate generating functions
counting Motzkin paths\textemdash both with height bounded and unbounded\textemdash by
statistics corresponding to the number of occurrences of various subwords,
yielding both closed-form and continued fraction formulas.
\end{abstract}
\textbf{\small{}Keywords:}{\small{} Goulden\textendash Jackson cluster
method, free monoids, lattice paths, Motzkin paths, generating functions,
statistics}{\let\thefootnote\relax\footnotetext{2010 \textit{Mathematics Subject Classification}. Primary 05A15; Secondary 05A05, 05C50, 68R05.}}

\section{Introduction}

Given a finite or countably infinite set $A$, let $A^{*}$ be the
set of all finite sequences of elements of $A$, including the empty
sequence. We call $A$ an \textit{alphabet}, the elements of $A$
\textit{letters}, and the elements of $A^{*}$ \textit{words}. By
defining an associative binary operation on two words by concatenating
them, we see that $A^{*}$ is a monoid under the operation of concatenation
(where the empty word is the identity element), and we call $A^{*}$
the \textit{free monoid} on $A$. The length $l(\alpha)$ of a word
$\alpha\in A^{*}$ is the number of letters in $\alpha$. For $\alpha,\beta\in A^{*}$,
we say that $\beta$ is a \textit{subword} of $\alpha$ if $\alpha=\gamma_{1}\beta\gamma_{2}$
for some $\gamma_{1},\gamma_{2}\in A^{*}$, and in this case we also
say $\alpha$ \textit{contains} $\beta$.

More generally, a \textit{free monoid} is a monoid isomorphic to a
free monoid on some alphabet. The combinatorial framework of free
monoids is useful for the study of combinatorial objects that can
be uniquely decomposed into sequences of ``prime elements'', corresponding
to letters in an alphabet. This framework can furthermore be generalized
using what are called ``monoid networks'', which were first introduced
by Gessel \cite[Chapter 6]{gessel-thesis} in a slightly different
yet equivalent form called ``$G$-systems''.\footnote{The term ``$G$-system'' was dropped at the request of Ira Gessel,
who prefers the name ``monoid network'' given by the author.} Roughly speaking, a monoid network consists of a digraph $G$ with
each arc assigned a set of letters from an alphabet $A$, in which
the set of sequences of arcs in $G$ is given a monoid structure and
is equipped with a monoid homomorphism.

The Goulden\textendash Jackson cluster method allows one to determine
the generating function for words in a free monoid $A^{*}$ by occurrences
of words in a set $B\subseteq A^{*}$ as subwords in terms of the
generating function for what are called ``clusters'' formed by words
in $B$, which is easier to compute. As its name suggests, this celebrated
result was first given by Goulden and Jackson in \cite{Goulden1979}.
The cluster method has seen a number of extensions and generalizations
\cite{Bassino2012,Edlin2000,Kong2005,Kupin2010,Noonan1999,Wen2005,Zeilberger2002},
and the cluster method itself can be viewed as a generalization of
the Carlitz\textendash Scoville\textendash Vaughan theorem, which
allows one to count words in a free monoid avoiding a specified set
of length 2 subwords.

In this paper, we give a new generalization of the Goulden\textendash Jackson
cluster method of a different flavor: we generalize the cluster method
to monoid networks, which gives a way of counting words in $A^{*}$
corresponding to walks between two specified vertices in $G$ (that
is, words in a regular language if the alphabet $A$ is finite) by
occurrences of subwords in a set $B$. Then the original version of
the cluster method corresponds to the special case in which $G$ consists
of a single vertex with a loop to which the entire alphabet $A$ is
assigned.

The organization of this paper is as follows. In Section 2, we give
an expository account of the original Goulden\textendash Jackson cluster
method. In Section 3, we introduce the combinatorial framework of
monoid networks and present our generalization of the cluster method
for monoid networks. Finally, in Section 4, we demonstrate how our
monoid network version of the cluster method can be used to tackle
problems in lattice path enumeration.

Although many types of lattice paths can be represented as walks in
certain digraphs, in this paper we focus on Motzkin paths, which are
paths in $\mathbb{Z}$ beginning and ending at 0 with steps $-1$,
$0$, and $1$ (also called ``down steps'', ``flat steps'', and ``up
steps'', respectively). We consider both regular Motzkin paths and
Motzkin paths bounded by height, and our results include bivariate
and multivariate generating functions counting these paths by ascents,
plateaus, peaks, and valleys\textemdash all of which are statistics
that are determined by occurrences of various subwords in the underlying
word of the Motzkin path\textemdash as well as generating functions
for Motzkin paths with restrictions on the heights at which these
subwords can occur, yielding both closed-form and continued fraction
formulas. Several interesting identities are uncovered along the way.

\section{The Goulden\textendash Jackson cluster method}

We begin this section with a motivating problem: let $A$ be a finite
or countably infinite alphabet and suppose that we want to count words
in $A^{*}$ that do not contain a specified set $B$ of forbidden
subwords of length at least 2. The Goulden\textendash Jackson cluster
method allows us to count this restricted set of words by counting
``clusters'' formed by words in $B$, which we shall define shortly. 

Given a word $\alpha=a_{1}a_{2}\cdots a_{n}\in A^{*}$ (where the
$a_{i}$ are letters) and a set $B\subseteq A^{*}$, we say that $(i,\beta)$
is a \textit{marked subword} of $\alpha$ if $\beta\in B$ and 
\[
\beta=a_{i}a_{i+1}\cdots a_{i+l(\beta)-1},
\]
that is, $\beta$ is a subword of $\alpha$ starting at position $i$.
Moreover, we say that $(\alpha,S)$ is a \textit{marked word} on $\alpha$
if $\alpha\in A^{*}$ and $S$ is any set of marked subwords of $\alpha$.

For example, suppose that $A=\{a,b,c\}$ and $B=\{abc,bca\}$. Then
\begin{equation}
\{abcabbcabc,\{(1,abc),(2,bca),(6,bca)\}\},\label{e-mkwrd}
\end{equation}
is a marked word which can also be displayed as

\begin{center}
\begin{tikzpicture}

\node at (0,0) {$a\:b\:c\:a\:b\:b\:c\:a\:b\:c\:.$};

\draw[red] (-1.08,-0.02) ellipse (13bp and 8bp);
\draw[red] (-0.78,-0.02) ellipse (13bp and 8bp);
\draw[red] (0.35,-0.02) ellipse (13bp and 8bp);

\end{tikzpicture}
\end{center}

The concatenation of two marked words is defined in the obvious way.
For example, (\ref{e-mkwrd}) can be obtained by concatenating $\{abca,\{(1,abc),(2,bca)\}\}$
and $\{bbcabc,\{(2,bca)\}\}$, i.e., \begin{center}
\begin{tikzpicture}

\node at (0,0) {$a\:b\:c\:a\quad$ and $\quad b\:b\:c\:a\:b\:c\:.$};

\draw[red] (-1.91,-0.02) ellipse (13bp and 8bp);
\draw[red] (-1.62,-0.02) ellipse (13bp and 8bp);
\draw[red] (1.18,-0.02) ellipse (13bp and 8bp);

\end{tikzpicture}
\end{center}

A marked word on $\alpha$ is called a \textit{cluster} on $\alpha$
if it is not a concatenation of two nonempty marked words. So, (\ref{e-mkwrd})
is not a cluster, but

\begin{center}
\begin{tikzpicture}

\node at (0,0) {$b\:c\:a\:b\:c\:a$};

\draw[red] (-0.42,-0.03) ellipse (13bp and 8bp);
\draw[red] (0.13,-0.03) ellipse (13bp and 8bp);
\draw[red] (0.41,-0.03) ellipse (13bp and 8bp);

\end{tikzpicture}
\end{center}

\noindent is a cluster. Two additional examples of clusters, using
$A=\{a\}$ and $B=\{aaaa\}$, are

\noindent \begin{center}
\begin{center}
\begin{tikzpicture}

\node at (0,0) {$a\:a\:a\:a\:a\:a$};

\draw[red] (-0.31,0) ellipse (18bp and 8bp);
\draw[red] (0.32,0) ellipse (18bp and 8bp);

\end{tikzpicture}
\end{center}
\par\end{center}

\noindent and

\noindent \begin{center}
\begin{center}
\begin{tikzpicture}

\node at (0,0) {$a\:a\:a\:a\:a\:a\:,$};

\draw[red] (-0.41,0.04) ellipse (18bp and 8bp);
\draw[red] (-0.09,0.04) ellipse (18bp and 8bp);
\draw[red] (0.22,0.04) ellipse (18bp and 8bp);

\end{tikzpicture}
\end{center}
\par\end{center}

\noindent which we include to emphasize the fact that a cluster is
not required to be ``maximal'' in the sense that every possible marked
subword must be included. If a word $\alpha$ has only one possible
cluster, then there is no need to indicate the positions of the marked
subwords and we say by abuse of language that the only cluster on
$\alpha$ is itself.

Before formally presenting the cluster method, we introduce some additional
notation. For a word $\alpha\in A^{*}$, let $\occ(\alpha)$ be the
number of occurrences in $\alpha$ of words in $B$ and let $C_{\alpha}$
be the set of all clusters on the word $\alpha$. Given a cluster
$c$, let $\mk(c)$ be the number of marked subwords in $c$. Given
a variable $t$ that commutes with all of the letters in $A$, define
\[
F(t)\coloneqq\sum_{\alpha\in A^{*}}\alpha t^{\occ(\alpha)}
\]
and 
\[
L(t)\coloneqq\sum_{\alpha\in A^{*}}\alpha\sum_{c\in C_{\alpha}}t^{\mk(c)},
\]
so that $F(t)$ is the generating function for words in $A^{*}$ by
the number of occurrences of words in $B$, and $L(t)$ is the generating
function for clusters by the number of marked subwords. Both $F(t)$
and $L(t)$ are elements of the formal power series algebra $K\langle\langle A^{*}\rangle\rangle[[t]]$,
where $K$ is a field of characteristic zero (which we can take to
be $\mathbb{C}$) and $K\langle\langle A^{*}\rangle\rangle$\textemdash called
the \textit{total algebra} of $A^{*}$ over $K$\textemdash is the
algebra of formal sums of words in $A^{*}$ with coefficients in $K$.
\begin{thm}[Goulden\textendash Jackson cluster method, version 1]
\label{t-gjcm1} Let $A$ be an alphabet and let $B\subseteq A^{*}$
be a set of words of length at least 2. Then, 
\[
F(t)=\bigg(1-\sum_{a\in A}a-L(t-1)\bigg)^{-1}.
\]
\end{thm}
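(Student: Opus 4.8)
The plan is to introduce an auxiliary generating function that enumerates \emph{all} marked words, not merely the clusters, and then to compute it in two genuinely different ways; matching the two expressions and performing the substitution $t\mapsto t-1$ will produce the theorem. Concretely, I would set
\[
M(t)=\sum_{(\alpha,S)}\alpha\,t^{|S|},
\]
where the sum ranges over all marked words $(\alpha,S)$ and $|S|$ is the number of marked subwords in $S$, so that $M(t)\in K\langle\langle A^{*}\rangle\rangle[[t]]$. Everything then reduces to evaluating this single object correctly.

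First I would compute $M(t)$ by grouping the marked words according to their underlying word $\alpha$. For a fixed $\alpha$, a marked word on $\alpha$ is exactly a choice of subset $S$ of the $\bad(\alpha)$ occurrences of words of $B$ in $\alpha$, so summing $t^{|S|}$ over all admissible $S$ gives $(1+t)^{\bad(\alpha)}$. Hence
\[
M(t)=\sum_{\alpha\in A^{*}}\alpha\,(1+t)^{\bad(\alpha)}=F(1+t),
\]
directly from the definition of $F$. This is the easy half, and it already explains why the argument $t-1$ appears in the final formula.

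The substance of the proof is a second, independent evaluation of $M(t)$ via a unique factorization of marked words. I would show that, under concatenation, every marked word factors uniquely as a string of single unmarked letters and clusters. To do this I would call a gap between two consecutive letters of $(\alpha,S)$ a \emph{breakpoint} when no marked subword of $S$ spans it; since concatenation of marked words takes the \emph{union} of the (shifted) mark sets and never creates, destroys, merges, or splits a mark, any factorization into nonempty marked words may cut only at breakpoints, and cutting at \emph{every} breakpoint yields the unique finest factorization into indecomposable pieces. A mark-free indecomposable must have length one, and every indecomposable of length at least two carries a marked subword; thus the free generators split into the single letters and the clusters. Because the weight $\alpha\,t^{|S|}$ is multiplicative (as $|S|$ is additive under concatenation), this freeness converts the monoid structure into a geometric series,
\[
M(t)=\bigg(1-\sum_{a\in A}a-L(t)\bigg)^{-1},
\]
with $\sum_{a\in A}a$ recording the single-letter generators and $L(t)$ recording the clusters, each of which contributes at least one mark so that $L(0)=0$. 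Equating the two evaluations gives $F(1+t)=(1-\sum_{a\in A}a-L(t))^{-1}$, and replacing $t$ by $t-1$ yields the stated identity.

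The hard part will be the unique factorization claim, i.e. verifying that the breakpoint decomposition is simultaneously well defined and \emph{forced}: I must check that each piece lying strictly between consecutive breakpoints is genuinely indecomposable and that no admissible cut can occur anywhere else, so that the factorization into letters and clusters is unique rather than merely existent. The one subtlety feeding into this, which I would state explicitly, is that concatenating marked words preserves the total mark count exactly---a new occurrence of a word of $B$ straddling the join is \emph{not} automatically marked---since it is precisely this additivity that licenses passing from the free decomposition to the closed form $(1-\sum_{a\in A}a-L(t))^{-1}$.
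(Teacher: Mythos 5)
Your proposal is correct and follows essentially the same route as the paper: the paper likewise proves $F(1+t)=\bigl(1-\sum_{a\in A}a-L(t)\bigr)^{-1}$ by binomially expanding $F(1+t)$ into the generating function for marked words weighted by number of marks, and then invoking the unique decomposition of every marked word into a sequence of single letters and clusters. The only difference is one of exposition: your breakpoint argument spells out the unique-factorization step that the paper dismisses as ``easy to see.''
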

\begin{proof}
We prove the equivalent statement 
\[
F(1+t)=\bigg(1-\sum_{a\in A}a-L(t)\bigg)^{-1}.
\]
We have 
\begin{align}
F(1+t) & =\sum_{\alpha\in A^{*}}\alpha(1+t)^{\occ(\alpha)}\nonumber \\
 & =\sum_{\alpha\in A^{*}}\alpha\sum_{k=0}^{\infty}{\occ(\alpha) \choose k}t^{k}\nonumber \\
 & =\sum_{\alpha\in A^{*}}\alpha\sum_{S\subseteq B_{\alpha}}t^{\left|S\right|},\label{e-gjcm1}
\end{align}
where $B_{\alpha}$ is the set of occurrences of words in $B$ in
$\alpha$. Note that (\ref{e-gjcm1}) counts marked words weighted
by the number of marked subwords that it contains, and from here it
is easy to see that 
\begin{align*}
F(1+t) & =\sum_{\alpha\in A^{*}}\alpha\sum_{S\subseteq B_{\alpha}}t^{\left|S\right|}\\
 & =\bigg(1-\sum_{a\in A}a-L(t)\bigg)^{-1}
\end{align*}
since every marked word is uniquely built from a sequence of letters
in $A$ and clusters.
\end{proof}
We indicate three specializations of Theorem \ref{t-gjcm1} that are
of particular importance:
\begin{itemize}
\item By setting $t=0$, we obtain 
\[
\bigg(1-\sum_{a\in A}a-L(-1)\bigg)^{-1}
\]
as the generating function for words in $A^{*}$ that do not contain
any words in $B$, which solves the problem posed at the beginning
of this subsection, assuming that we can compute the cluster generating
function $L(t)$.
\item If every word in $B$ has length exactly 2, then setting $t=0$ yields
a result which is sometimes called the Carlitz\textendash Scoville\textendash Vaughan
theorem, independently discovered by Fr\"oberg \cite[Section 4]{Froeberg1975},
by Carlitz, Scoville, and Vaughan \cite[Theorem 7.3]{Carlitz1976},
and by Gessel \cite[Theorem 4.1]{gessel-thesis}. In fact, Chapters
4 and 5 of Gessel's doctoral thesis \cite{gessel-thesis} are devoted
to the Carlitz\textendash Scoville\textendash Vaughan theorem and
its many enumerative applications.
\item By setting $t=1$, we obtain the free monoid identity 
\begin{equation}
\sum_{\alpha\in A^{*}}\alpha=\bigg(1-\sum_{a\in A}a\bigg)^{-1}.\label{e-fmtot}
\end{equation}
\end{itemize}
More generally, we can assign each word in $B$ its own variable.
Write $B=\{\beta_{1},\beta_{2},\dots\}$ so that the words in $B$
are ordered. (Here, $B$ is presented as countably infinite although
in most applications it is finite.) Given a word $\alpha\in A^{*}$,
let $\occ_{k}(\alpha)$ be the number of occurrences of $\beta_{k}$
in $\alpha$, and given a cluster $c$, let $\mk_{k}(c)$ be the number
of marked subwords in $c$ of the form $(j,\beta_{k})$ for some position
$j$. Let $t_{1},t_{2},\dots$ be variables that commute with each
other and with the letters of $A$, and define the generating functions
\[
F(t_{1},t_{2}\dots)\coloneqq\sum_{\alpha\in A^{*}}\alpha\prod_{k=1}^{\infty}t_{k}^{\occ_{k}(\alpha)}
\]
and 
\[
L(t_{1},t_{2},\dots)\coloneqq\sum_{\alpha\in A^{*}}\alpha\sum_{c\in C_{\alpha}}\prod_{k=1}^{\infty}t_{k}^{\mk_{k}(c)}.
\]
Then we have a refinement of Theorem \ref{t-gjcm1}, which follows
by the same reasoning as before.
\begin{thm}[Goulden\textendash Jackson cluster method, version 2]
\label{t-gjcm2} Let $A$ be an alphabet and let $B=\{\beta_{1},\beta_{2},\dots\}\subseteq A^{*}$
be a set of words of length at least 2. Then, 
\[
F(t_{1},t_{2}\dots)=\bigg(1-\sum_{a\in A}a-L(t_{1}-1,t_{2}-1,\dots)\bigg)^{-1}.
\]
\end{thm}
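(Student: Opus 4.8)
The plan is to mirror the proof of Theorem~\ref{t-gjcm1} verbatim, replacing the single indeterminate $t$ with the family $t_{1},t_{2},\dots$ and the binomial coefficient bookkeeping with a product over the distinct subwords. Concretely, I would start from the definition of $F(t_{1},t_{2},\dots)$ evaluated at $t_{k}\mapsto 1+t_{k}$ and expand each factor $(1+t_{k})^{\bad_{k}(\alpha)}$ by the binomial theorem, so that
\[
F(1+t_{1},1+t_{2},\dots)=\sum_{\alpha\in A^{*}}\alpha\prod_{k=1}^{\infty}\sum_{m_{k}=0}^{\infty}\binom{\bad_{k}(\alpha)}{m_{k}}t_{k}^{m_{k}}.
\]
The key observation—exactly the one used in version~1—is that choosing $m_{k}$ of the $\bad_{k}(\alpha)$ occurrences of $\beta_{k}$, simultaneously for every $k$, is the same as choosing a subset $S$ of the set $B_{\alpha}$ of all marked subwords of $\alpha$, where now $t^{|S|}$ is refined to $\prod_{k}t_{k}^{|S_{k}|}$ with $S_{k}$ the marked subwords in $S$ of the form $(j,\beta_{k})$. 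This rewrites the right-hand side as $\sum_{\alpha}\alpha\sum_{S\subseteq B_{\alpha}}\prod_{k}t_{k}^{|S_{k}|}$, which is precisely the generating function for marked words weighted by the refined statistic recording how many marked subwords of each type $\beta_{k}$ are present.

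From there the argument is identical in structure to version~1: every marked word factors uniquely as a sequence of unmarked letters from $A$ and clusters, so the refined marked-word generating function is the geometric-series inverse
\[
\sum_{\alpha\in A^{*}}\alpha\sum_{S\subseteq B_{\alpha}}\prod_{k=1}^{\infty}t_{k}^{|S_{k}|}=\bigg(1-\sum_{a\in A}a-L(t_{1},t_{2},\dots)\bigg)^{-1}.
\]
Substituting $t_{k}\mapsto t_{k}-1$ in both sides then yields the stated formula. Because the excerpt already announces that the result ``follows by the same reasoning as before,'' I would keep the write-up brief, emphasizing only the two places where the refinement enters—the binomial expansion turning into a product over $k$, and the statistic $|S|$ refining to the tuple $(|S_{1}|,|S_{2}|,\dots)$—and otherwise citing the unique factorization of marked words established in the proof of Theorem~\ref{t-gjcm1}.

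The only point demanding genuine care is the interchange of the product over $k$ with the sum over subsets $S\subseteq B_{\alpha}$, i.e.\ verifying that the combinatorial product rule $\prod_{k}\binom{\bad_{k}(\alpha)}{m_{k}}$ really does enumerate subsets of $B_{\alpha}$ with $|S_{k}|=m_{k}$. This is immediate once one notes that occurrences of distinct subwords $\beta_{k}$ are distinguishable objects, so the choices for different $k$ are independent and the multinomial-style factorization holds. I expect this to be the main (and essentially only) obstacle, and it is mild: it is the same fact that underlies version~1, now tracked type-by-type. Everything else—formal convergence in $K\langle\langle A^{*}\rangle\rangle[[t_{1},t_{2},\dots]]$ and the uniqueness of the decomposition into letters and clusters—transfers without change from the proof of Theorem~\ref{t-gjcm1}.
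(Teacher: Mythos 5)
Your proposal is correct and follows exactly the route the paper intends: the paper gives no separate proof of Theorem~\ref{t-gjcm2}, stating only that it ``follows by the same reasoning as before,'' and your argument is precisely that reasoning---the binomial expansion of each $(1+t_{k})^{\bad_{k}(\alpha)}$, the reinterpretation as a sum over subsets $S\subseteq B_{\alpha}$ refined by type, and the unique factorization of marked words into letters and clusters. Your added care about the independence of choices across distinct $\beta_{k}$ is a sound (if mild) elaboration of the step the paper leaves implicit.
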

The statement of Theorem \ref{t-gjcm2} uses an infinite set $B$
and infinitely many variables $t_{i}$, but it is clear that the finite
case works as well. The number of variables also does not need to
equal the number of words in $B$; for example, we can have $B=\{\beta_{1},\dots,\beta_{k}\}$
along with two variables $t_{1}$ and $t_{2}$, and attach $t_{1}$
to all $\beta_{i}$ with $i$ odd and attach $t_{2}$ to all $\beta_{i}$
with $i$ even.

As an example, let $A=\{a,b,c\}$ and suppose that we want to count
words in $A^{*}$ by occurrences of $\beta_{1}=acb$ and $\beta_{2}=bc$.
Then the only clusters are $acb$, $bc$, and $acbc$, so 
\[
L(t_{1},t_{2})=acbt_{1}+bct_{2}+acbct_{1}t_{2}
\]
and by Theorem \ref{t-gjcm2}, we obtain 
\begin{equation}
F(t_{1},t_{2})=(1-a-b-c-acb(t_{1}-1)-bc(t_{2}-1)-acbc(t_{1}-1)(t_{2}-1))^{-1}\label{e-example1}
\end{equation}
as the generating function for words in $A^{*}$ by occurrences of
$acb$ and $bc$. By setting $t_{1}=t_{2}=0$, we obtain 
\begin{equation}
(1-a-b-c+acb+bc-acbc)^{-1}\label{e-example2}
\end{equation}
as the generating function for words in $A^{*}$ which contain neither
$acb$ nor $bc$.

Now, let $x$ be a variable that commutes with $t_{1}$ and $t_{2}$.
If we apply the homomorphism sending each of the letters to $x$,
we obtain the generating functions 
\[
\frac{1}{1-3x-x^{2}(t_{2}-1)-x^{3}(t_{1}-1)-x^{4}(t_{1}-1)(t_{2}-1)}
\]
and 
\[
\frac{1}{1-3x+x^{2}+x^{3}-x^{4}}
\]
from (\ref{e-example1}) and (\ref{e-example2}), respectively, where
$x$ is keeping track of the word length.

We say that the set $B$ is \textit{reduced} if no word $\beta\in B$
is a subword of another word $\beta^{\prime}$ in $B$. Although the
cluster method as presented above works regardless of whether $B$
is reduced, Goulden and Jackson gave a formula in their original paper
\cite{Goulden1979} for the cluster generating function when $A$
and $B$ are finite sets with $B$ reduced. A set $B$ of forbidden
subwords can always be replaced by a reduced set and still yield the
same restricted set of words; if $\beta\in B$ is a subword of $\beta^{\prime}\in B$,
then we can remove $\beta^{\prime}$ from $B$ because containing
$\beta^{\prime}$ implies containing $\beta$. However, the criterion
of having a reduced set can be an issue if we want to count words
by occurrences of subwords (that is, without setting $t=0$). For
instance, we would not be able to use Goulden and Jackson's formula
to compute the cluster generating function given $B=\{aba,abab\}$
since $aba$ is a subword of $abab$. 

As part of \cite{Noonan1999}, Noonan and Zeilberger wrote a Maple
package that handles the case where $B$ is arbitrary (i.e., not necessarily
reduced), but without a detailed explanation of their algorithms.
Bassino, Cl\'{e}ment, and Nicod\`{e}me \cite{Bassino2012} later
gave an explicit expression for the cluster generating function in
the non-reduced case. We omit these formulas of Goulden\textendash Jackson
and Bassino\textendash Cl\'{e}ment\textendash Nicod\`{e}me because
the cluster generating functions in Section 4 of this paper will require
essentially no computation.

\section{Our generalization of the cluster method}

\subsection{Monoid networks}

Throughout this section, fix a field $K$ of characteristic zero and
let $A$ be a finite or countably infinite alphabet. As in the previous
section, $K\langle\langle A^{*}\rangle\rangle$ is the total algebra
of $A^{*}$ over $K$. We also let $\Mat_{m}(K\langle\langle A^{*}\rangle\rangle)$
denote the algebra of $m\times m$ matrices with entries in $K\langle\langle A^{*}\rangle\rangle$.

Let $G$ be a digraph on the vertex set $[m]$ such that each arc
$(i,j)$ of $G$ is assigned a set of letters $P_{i,j}$ in $A$,
and let $P$ be the set of all pairs $(a,e)$ where $e=(i,j)$ is
an arc of $G$ and $a\in P_{i,j}$. Define $\overrightarrow{P^{*}}\subseteq P^{*}$
to be the subset of all sequences $\mu=(a_{1},e_{1})(a_{2},e_{2})\cdots(a_{n},e_{n})$
where $e_{1}e_{2}\cdots e_{n}$ is a walk in $G$. Given $\mu=(a_{1},e_{1})(a_{2},e_{2})\cdots(a_{n},e_{n})$
in $\overrightarrow{P^{*}}$, we define $\rho(\mu)\coloneqq a_{1}a_{2}\cdots a_{n}$
to be the word obtained by projecting onto $A^{*}$ and let $E(\mu)\coloneqq(i,j)$
where $i$ and $j$ are the initial and terminal vertices, respectively,
of the walk $e_{1}e_{2}\cdots e_{n}$.

For example, consider the monoid network in Figure 1. \begin{center}
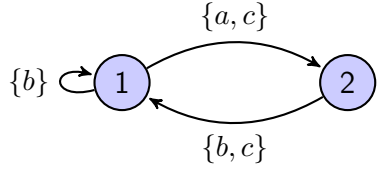

\begin{tikzpicture}[->,>=stealth',shorten >=1pt,auto,node distance=3cm,   thick,main node/.style={circle,fill=blue!20,draw,font=\sffamily}]

\node[main node] (1) {1};
\node[main node] (2) [right of =1] {2};

\path[every node/.style={font=\sffamily\small}]     
(1) edge [bend left] node {$\{a,c\}$} (2)
edge [loop left] node {$\{b\}$} (2)   
(2) edge [bend left] node {$\{b,c\}$} (1);

\end{tikzpicture}
\captionof{figure}{An example of a monoid network}
\end{center}Here $P=\{(b,(1,1)),(a,(1,2)),(c,(1,2)),(b,(2,1)),(c,(2,1))\}$. One
element of $\overrightarrow{P^{*}}$ is $\mu=(b,(2,1))(b,(1,1))(a,(1,2))$,
and so $\rho(\mu)=bba$ and $E(\mu)=(2,2)$.

We say that $(G,P)$ a \textit{monoid network} on $A^{*}$ if for
all nonempty $\mu,\nu\in\overrightarrow{P^{*}}$, if $\rho(\mu)=\rho(\nu)$
and $E(\mu)=E(\nu)$ then $\mu=\nu$. That is, the same word in $A^{*}$
cannot be obtained by traversing two different walks with the same
initial and terminal vertices. It is easy to see that $(G,P)$ in
the example given above is a monoid network.

We can very naturally represent words in $\overrightarrow{P^{*}}$
using matrices. For each element $p=(a,(i,j))\in P$, we associate
$p$ with the $m\times m$ matrix $M_{p}$ with $a$ in the $(i,j)$
entry and 0 everywhere else, which defines a monoid homomorphism $\lambda:P^{*}\rightarrow\Mat_{m}(K\langle\langle A^{*}\rangle\rangle)$,
where we consider the codomain as the multiplicative monoid of the
algebra $\Mat_{m}(K\langle\langle A^{*}\rangle\rangle)$. Applying
$\lambda$ to the empty word $1$ gives the $m\times m$ identity
matrix $I_{m}$.

If $\mu\in\overrightarrow{P^{*}}$ and $E(\mu)=(i,j)$ , then $\lambda(\mu)$
is the $m\times m$ matrix with $\rho(\mu)$ in the $(i,j)$ entry
and 0 everywhere else; we denote this matrix $M_{\mu}$. If $\mu\notin\overrightarrow{P^{*}}$,
then $M_{\mu}=\lambda(\mu)=0_{m}$, the $m\times m$ zero matrix. 

Returning to the example above, the matrices $M_{p}$ are 
\[
\begin{bmatrix}b & 0\\
0 & 0
\end{bmatrix},\begin{bmatrix}0 & a\\
0 & 0
\end{bmatrix},\begin{bmatrix}0 & c\\
0 & 0
\end{bmatrix},\begin{bmatrix}0 & 0\\
b & 0
\end{bmatrix},\,\mathrm{and}\,\begin{bmatrix}0 & 0\\
c & 0
\end{bmatrix},
\]
and for $\mu=(b,(2,1))(b,(1,1))(a,(1,2))$, we have 
\[
\lambda(\mu)=\begin{bmatrix}0 & 0\\
0 & bba
\end{bmatrix}.
\]

We then extend $\lambda$ by linearity to an algebra homomorphism
$K\langle\langle P^{*}\rangle\rangle\rightarrow\Mat_{m}(K\langle\langle A^{*}\rangle\rangle)$,
which we also call $\lambda$ by a slight abuse of notation. Given
a monoid network $(G,P)$ and a subset $S\subseteq A^{*}$, let $\overrightarrow{\Gamma}_{\!\!G}(S)\in\Mat_{m}(K\langle\langle A^{*}\rangle\rangle)$
be the matrix whose $(i,j)$ entry is the generating function for
words in $S$ that can be obtained by traversing a walk from $i$
to $j$ in $G$. It is clear that 
\[
\overrightarrow{\Gamma}_{\!\!G}(S)=\sum_{\mu\in V}M_{\mu}
\]
where $V$ is the set of all words $\mu\in P^{*}$ such that $\rho(\mu)\in S$. 

If the alphabet $A$ is finite, then the idea of monoid networks may
seem too similar to finite-state automata to warrant its own definition,
but our approach is novel and is based on the monoid structure of
$P^{*}$ and the application of the homomorphism $\lambda$. Moreover,
our construction generalizes the combinatorial framework of free monoids,
hence the name ``monoid network''. For example, the following is an
elementary result traditionally proven using the transfer-matrix method
(see \cite[Section 4.7]{Stanley2011} or \cite[Section V.6]{Flajolet2009}),
but we can give a very simple proof using the homomorphism $\lambda$.
\begin{thm}
\label{t-totalgenmat} Suppose that $(G,P)$ is a monoid network on
$A^{*}$. Then 
\[
\overrightarrow{\Gamma}_{\!\!G}(A^{*})=\bigg(I_{m}-\sum_{p\in P}M_{p}\bigg)^{-1}.
\]
\end{thm}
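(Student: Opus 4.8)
The plan is to express the generating function $\overrightarrow{\Gamma}_{\!\!G}(A^*)$ as an infinite sum over walks and then recognize that sum as a geometric series in the matrix $\sum_{p \in P} M_p$. First I would apply the formula stated just before the theorem: since $A^*$ consists of all words, the set $V$ is all of $\overrightarrow{P^*}$, so
\[
\overrightarrow{\Gamma}_{\!\!G}(A^*) = \sum_{\alpha \in \overrightarrow{P^*}} M_\alpha = \sum_{\alpha \in P^*} \lambda(\alpha),
\]
where the last equality uses the fact that $\lambda(\alpha) = 0_m$ for $\alpha \notin \overrightarrow{P^*}$, so extending the sum to all of $P^*$ adds nothing. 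The point of this step is to replace a sum constrained to walks in $G$ with an unconstrained sum over the free monoid $P^*$, which is where the homomorphism $\lambda$ does its work.

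Next I would invoke the free monoid total-algebra identity, the analogue of equation (\ref{e-fmtot}) but with alphabet $P$ in place of $A$: in $K\langle\langle P^*\rangle\rangle$ we have
\[
\sum_{\alpha \in P^*} \alpha = \bigg(1 - \sum_{p \in P} p\bigg)^{-1}.
\]
Since $\lambda$ is an algebra homomorphism (extended by linearity as described in the excerpt), it commutes with the formal inversion and sends this identity to
\[
\sum_{\alpha \in P^*} \lambda(\alpha) = \lambda\bigg(\bigg(1 - \sum_{p \in P} p\bigg)^{-1}\bigg) = \bigg(I_m - \sum_{p \in P} M_p\bigg)^{-1},
\]
using $\lambda(1) = I_m$ and $\lambda(p) = M_p$. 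Combining with the first step yields the claim.

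The main obstacle is justifying that $\lambda$ respects the inverse, i.e.\ that $\lambda\big((1 - \sum_p p)^{-1}\big) = (I_m - \sum_p M_p)^{-1}$. This is really a convergence/continuity statement about formal power series: the sum $\sum_{p}p$ has no constant term, so $(1-\sum_p p)^{-1} = \sum_{n \geq 0}(\sum_p p)^n$ converges in the total algebra, and because $\lambda$ is a continuous algebra homomorphism with respect to the natural filtration (each homogeneous piece maps to matrices whose entries are homogeneous of the same degree), we may pass $\lambda$ through the infinite sum term by term. Concretely, $\lambda\big((\sum_p p)^n\big) = (\sum_p M_p)^n = \sum_{\alpha \in P^*,\, l(\alpha)=n} M_\alpha$, and summing over $n$ recovers $(I_m - \sum_p M_p)^{-1}$ as a matrix geometric series; the monoid network condition guarantees that distinct walks $\alpha$ contributing to a fixed $(i,j)$ entry produce distinct words, so there is no collapse and the bookkeeping is clean. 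The verification that this geometric series converges entrywise in $\Mat_m(K\langle\langle A^*\rangle\rangle)$ is routine since $\sum_p M_p$ has entries with no constant term.
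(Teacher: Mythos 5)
Your proof is correct and is essentially the paper's own argument: the paper likewise takes the free monoid identity $\sum_{\alpha\in P^{*}}\alpha=\big(1-\sum_{p\in P}p\big)^{-1}$ applied to $P^{*}$ and applies $\lambda$ to both sides. Your additional details (the vanishing of $M_{\alpha}$ off $\overrightarrow{P^{*}}$ and the continuity of $\lambda$ with respect to the filtration, which justifies passing $\lambda$ through the inverse) simply make explicit what the paper leaves implicit.
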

\begin{proof}
Take 
\[
\sum_{\mu\in P^{*}}\mu=\bigg(1-\sum_{p\in P}p\bigg)^{-1},
\]
which is (\ref{e-fmtot}) applied to the free monoid $P^{*}$, and
then apply $\lambda$ to both sides of the equation.
\end{proof}
Our proof of the generalized Goulden\textendash Jackson cluster method
presented later in this section is of a similar flavor. 

Continuing with the example above, we have 
\begin{eqnarray*}
\overrightarrow{\Gamma}_{\!\!G}(A^{*}) & = & \begin{bmatrix}1-b & -a-c\\
-b-c & 1
\end{bmatrix}^{-1}
\end{eqnarray*}
by Theorem \ref{t-totalgenmat}. If we want the generating function
for words by length that can be obtained by traversing a walk from
1 to 2 in $(G,P)$, then we apply to $\overrightarrow{\Gamma}_{\!\!G}(A^{*})$
the homomorphism sending each of the letters to $x$ to obtain the
matrix 
\[
\begin{bmatrix}1-x & -2x\\
-2x & 1
\end{bmatrix}^{-1}=\begin{bmatrix}{\displaystyle \frac{1}{1-x-4x^{2}}\vphantom{{\displaystyle \frac{\frac{dy}{dx}}{\frac{dy}{dx}}}}} & {\displaystyle \frac{2x}{1-x-4x^{2}}}\\
{\displaystyle \frac{2x}{1-x-4x^{2}}}\vphantom{{\displaystyle \frac{\frac{dy}{dx}}{\frac{dy}{dx}}}} & {\displaystyle \frac{1-x}{1-x-4x^{2}}}
\end{bmatrix}
\]
and then take the $(1,2)$ entry.

\subsection{The Goulden\textendash Jackson cluster method for monoid networks}

To motivate our generalization of the Goulden\textendash Jackson cluster
method, let us combine two previous examples and suppose that we want
to count words on the alphabet $A=\{a,b,c\}$ that satisfy two conditions.
First, these words cannot contain any occurrences of $\beta_{1}=acb$
and $\beta_{2}=bc$, and second, these words must be obtainable by
traversing a walk from vertex 1 to vertex 2 in the monoid network
in Figure 1 (from Subsection 3.1).

We can do this using our monoid network version of the Goulden\textendash Jackson
cluster method, which we now present in full generality. Let $A$
be an alphabet and let $B=\{\beta_{1},\beta_{2},\dots\}\subseteq A^{*}$
be a set of words. Moreover, let $(G,P)$ be a monoid network with
$m$ vertices, and for each positive integer $k$, let $\overrightarrow{B}_{\negmedspace k}$
be the set of all words $\mu$ in $\overrightarrow{P^{*}}$ with $\rho(\mu)=\beta_{k}$,
and let $\overrightarrow{B}=\bigcup_{k=1}^{\infty}\overrightarrow{B}_{\negmedspace k}$. 

Define $\overrightarrow{F}_{\!\!G}(t_{1},t_{2},\dots)$ to be the
$m\times m$ matrix whose $(i,j)$ entry is the sum
\[
\sum_{\mu}\rho(\mu)\prod_{k=1}^{\infty}t_{k}^{\occ_{k}(\rho(\mu))}
\]
over all $\mu\in\overrightarrow{P^{*}}$ with $E(\mu)=(i,j)$, which
is the same as the sum 
\[
\sum_{\alpha}\alpha\prod_{k=1}^{\infty}t_{k}^{\occ_{k}(\alpha)}
\]
over all $\alpha\in A^{*}$ that can be obtained by traversing a walk
from vertex $i$ to vertex $j$ in the monoid network $(G,P)$. Furthermore,
define 
\[
\overrightarrow{L}_{\negmedspace G}(t_{1},t_{2},\dots)\coloneqq\sum_{\mu\in\overrightarrow{P^{*}}}M_{\mu}\sum_{c\in C_{\mu}}\prod_{k=1}^{\infty}t_{k}^{\mk_{k}(c)},
\]
where $C_{\mu}$ is the set of all clusters (formed by words in $\overrightarrow{B}$)
on the word $\mu$, and $\mk_{k}(c)$ is the number of marked subwords
in $c$ of the form $(u,\gamma)$ for some position $u$ and some
$\gamma\in\overrightarrow{B}_{\negmedspace k}$. We will refer to
$\overrightarrow{L}_{\negmedspace G}(t_{1},t_{2},\dots)$ as the \textit{cluster
matrix}.
\begin{thm}[Goulden\textendash Jackson cluster method for monoid networks]
\label{t-gjcmmn} Let $A$ be an alphabet and let $B=\{\beta_{1},\beta_{2},\dots\}\subseteq A^{*}$
be a set of words of length at least 2. Also, let $G$ be a digraph
on $[m]$ and let $(G,P)$ be a monoid network on $A^{*}$. Then,
\[
\overrightarrow{F}_{\!\!G}(t_{1},t_{2}\dots)=\bigg(I_{m}-\sum_{p\in P}M_{p}-\overrightarrow{L}_{\negmedspace G}(t_{1}-1,t_{2}-1,\dots)\bigg)^{-1}.
\]
\end{thm}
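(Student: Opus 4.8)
The plan is to reduce the statement to the ordinary cluster method---Theorem~\ref{t-gjcm2} applied to the free monoid $P^{*}$---and then transport the resulting identity through the homomorphism $\lambda$, exactly as in the proof of Theorem~\ref{t-totalgenmat}. First I would regard $P$ as an alphabet and apply Theorem~\ref{t-gjcm2} to $P^{*}$ with forbidden set $\overrightarrow{B}$, assigning the single indeterminate $t_{u}$ to every word in $\overrightarrow{B}_{\negmedspace u}$; this grouped assignment is permitted by the remark following Theorem~\ref{t-gjcm2}. Since each $\beta_{u}$ has length at least $2$, so does every word in $\overrightarrow{B}_{\negmedspace u}$, and the hypotheses of Theorem~\ref{t-gjcm2} are met. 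Writing $F$ and $L$ for the resulting word and cluster generating functions in $K\langle\langle P^{*}\rangle\rangle[[t_{1},t_{2},\dots]]$, this yields
\[
F(t_{1},t_{2},\dots)=\bigg(1-\sum_{p\in P}p-L(t_{1}-1,t_{2}-1,\dots)\bigg)^{-1}.
\]

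The heart of the argument is to check that applying $\lambda$ termwise converts each piece of this identity into its counterpart in the statement. That $\lambda\big(\sum_{p\in P}p\big)=\sum_{p\in P}M_{p}$ is immediate, and $\lambda(L)=\overrightarrow{L}_{\negmedspace G}$ holds almost by definition: since $\lambda(\alpha)=M_{\alpha}=0_{m}$ whenever $\alpha\notin\overrightarrow{P^{*}}$, only the walk-terms survive, and the surviving sum is exactly the one defining $\overrightarrow{L}_{\negmedspace G}$. The identity $\lambda(F)=\overrightarrow{F}_{\!\! G}$ takes a little more: here I would use that every contiguous subword of an $\alpha\in\overrightarrow{P^{*}}$ is again a walk, so the occurrences of words of $\overrightarrow{B}_{\negmedspace u}$ in $\alpha$ correspond bijectively to the occurrences of $\beta_{u}$ in $\rho(\alpha)$, whence $\bad_{u}(\alpha)=\bad_{u}(\rho(\alpha))$. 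Reading off the $(i,j)$ entry of $\lambda(F)=\sum_{\alpha\in\overrightarrow{P^{*}}}M_{\alpha}\prod_{k}t_{k}^{\bad_{k}(\alpha)}$ then reproduces the defining sum $\sum_{\alpha}\rho(\alpha)\prod_{k}t_{k}^{\bad_{k}(\rho(\alpha))}$ over $\alpha$ with $E(\alpha)=(i,j)$.

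Finally, because $\lambda$ is an algebra homomorphism, it maps any unit to a unit and respects inverses: the element $1-\sum_{p}p-L(t_{1}-1,\dots)$ differs from $1$ by a series of words of positive length in $P^{*}$, so it is a unit, and likewise $I_{m}-\sum_{p}M_{p}-\overrightarrow{L}_{\negmedspace G}(t_{1}-1,\dots)$ is a unit, so $\lambda$ sends the inverse of the former to the inverse of the latter. Since $\lambda$ fixes each indeterminate $t_{k}$, the substitution $t_{k}\mapsto t_{k}-1$ commutes with it, and applying $\lambda$ to the displayed identity gives
\[
\overrightarrow{F}_{\!\! G}(t_{1},t_{2},\dots)=\bigg(I_{m}-\sum_{p\in P}M_{p}-\overrightarrow{L}_{\negmedspace G}(t_{1}-1,t_{2}-1,\dots)\bigg)^{-1},
\]
as desired. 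I expect the step requiring the most care to be the subword bijection underlying $\bad_{u}(\alpha)=\bad_{u}(\rho(\alpha))$---in particular, confirming that collapsing all of $\overrightarrow{B}_{\negmedspace u}$ onto a single indeterminate $t_{u}$ faithfully records the statistic $\bad_{u}(\rho(\alpha))$---rather than the formal homomorphism argument, which is routine.
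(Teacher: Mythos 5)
Your proposal is correct and takes essentially the same route as the paper's own proof: apply Theorem~\ref{t-gjcm2} to the alphabet $P$ with forbidden set $\overrightarrow{B}$, attaching $t_{u}$ to every word of $\overrightarrow{B}_{\negmedspace u}$, and then push the resulting identity through the homomorphism $\lambda$. The verifications you spell out---that $\bad_{u}(\alpha)=\bad_{u}(\rho(\alpha))$ via the bijection between occurrences (every contiguous subword of a walk is a walk), that non-walk terms vanish under $\lambda$ so $\lambda(L)=\overrightarrow{L}_{\negmedspace G}$, and that $\lambda$ carries inverses to inverses---are precisely the details the paper's two-sentence proof leaves implicit.
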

\begin{proof}
Apply the original Goulden\textendash Jackson cluster method (Theorem
\ref{t-gjcm2}) for the alphabet $P$ and the set $\overrightarrow{B}$,
where we attach the variable $t_{k}$ to each word in $\overrightarrow{B}_{\negmedspace k}$.
Then applying the homomorphism $\lambda$ yields the desired result.
\end{proof}
As before, the set of words in $B$ need not be infinite, and the
number of variables can be less than the number of words in $B$.
It is also possible to alter the cluster matrix to only include clusters
occurring at specified positions in the monoid network, which we do
in Section 4 to count Motzkin paths with no occurrences of subwords
at specified heights. 

We mention three specializations which are completely analogous to
those given after Theorem \ref{t-gjcm1}:
\begin{itemize}
\item By setting each variable equal to 0, we obtain 
\[
\bigg(I_{m}-\sum_{p\in P}M_{p}-\overrightarrow{L}_{\negmedspace G}(-1,-1,\dots)\bigg)^{-1}
\]
as the $m\times m$ matrix whose $(i,j)$ entry is the sum $\sum_{\alpha}\alpha$
over all $\alpha\in A^{*}$ that can be obtained by traversing a walk
from vertex $i$ to vertex $j$ in the monoid network $(G,P)$ and
contain no occurrences of words in $B$.
\item If every word in $B$ has length exactly 2, then setting each variable
equal to 0 yields a monoid network version of the Carlitz\textendash Scoville\textendash Vaughan
theorem.
\item Setting each variable equal to 1 gives an alternative proof for Theorem
\ref{t-totalgenmat}.
\end{itemize}
Observe that the original Goulden\textendash Jackson cluster method
corresponds to the special case in which the monoid network consists
of a single vertex with a loop to which the entire alphabet $A$ is
assigned. Thus Theorem \ref{t-gjcmmn} can accurately be characterized
as a generalization of the Goulden\textendash Jackson cluster method.

Finally, we note that if the alphabet $A$ is finite, then a monoid
network gives the transition diagram of a unambiguous finite automaton.
Unambiguous finite automata are equivalent to deterministic finite
automata, and the transition diagram of a deterministic finite automaton
is a monoid network. Therefore, Theorem \ref{t-gjcmmn} can be used
to count words in a regular language by occurrences of a specified
set of subwords, which has a rational generating function. See \cite{Berstel1988,Flajolet2009,Salomaa1978,Stanley2011}
for several references on the subjects of regular languages, automata,
and rational generating functions.

Let us now complete the example from earlier. We have 
\begin{align*}
\overrightarrow{L}_{\negmedspace G}(t_{1},t_{2}) & =\begin{bmatrix}acb & 0\\
0 & 0
\end{bmatrix}t_{1}+\begin{bmatrix}0 & 0\\
0 & bc
\end{bmatrix}t_{2}+\begin{bmatrix}0 & bc\\
0 & 0
\end{bmatrix}t_{2}+\begin{bmatrix}0 & acbc\\
0 & 0
\end{bmatrix}t_{1}t_{2}\\
 & =\begin{bmatrix}acbt_{1} & bct_{2}+acbct_{1}t_{2}\\
0 & bct_{2}
\end{bmatrix};
\end{align*}
indeed, recall that the only three clusters formed by the words $acb$
and $bc$ are $acb$, $bc$, and $acbc$, which can be obtained in
the given monoid network by traversing walks with initial and terminal
vertices indicated in the matrices above. Thus, 
\begin{align*}
\overrightarrow{F}_{\!\!G}(t_{1},t_{2}) & =\bigg(I_{2}-\sum_{p\in P}M_{p}-\overrightarrow{L}_{\negmedspace G}(t_{1}-1,t_{2}-1)\bigg)^{-1}\\
 & =\left(\begin{bmatrix}1 & 0\\
0 & 1
\end{bmatrix}-\begin{bmatrix}b & a+c\\
b+c & 0
\end{bmatrix}-\begin{bmatrix}acb(t_{1}-1) & bc(t_{2}-1)+acbc(t_{1}-1)(t_{2}-1)\\
0 & bc(t_{2}-1)
\end{bmatrix}\right)^{-1}\\
 & =\begin{bmatrix}1-b-acb(t_{1}-1) & -a-c-bc(t_{2}-1)-acbc(t_{1}-1)(t_{2}-1)\\
-b-c & 1-bc(t_{2}-1)
\end{bmatrix}^{-1}.
\end{align*}
Now we apply the homomorphism sending each of the letters to $x$,
yielding the matrix 
\[
\begin{bmatrix}1-x-x^{3}(t_{1}-1) & -2x-x^{2}(t_{2}-1)-x^{4}(t_{1}-1)(t_{2}-1)\\
-2x & 1-x^{2}(t_{2}-1)
\end{bmatrix}^{-1}
\]
whose $(1,2)$ entry is 
\[
\frac{2x-(1-t_{2})x^{2}+(1-t_{1}-t_{2}+t_{1}t_{2})x^{4}}{1-x-(3+t_{2})x^{2}+(2-t_{1}-t_{2})x^{3}-(1-t_{1}-t_{2}+t_{1}t_{2})x^{5}},
\]
which is the generating function for words obtained by traversing
a walk from vertex 1 to vertex 2 in the given monoid network, weighted
by length, occurrences of $acb$, and occurrences of $bc$. Setting
$t_{1}=t_{2}=0$ gives the generating function 
\[
\frac{2x-x^{2}+x^{4}}{1-x-3x^{2}+2x^{3}-x^{5}}
\]
for those words that do not contain any occurrences of $acb$ or $bc$.

We also state a weighted version of Theorem \ref{t-gjcmmn}. Let $\{\,w_{a}^{(i.j)}\mid(a,(i,j))\in P\,\}$
be a set of weights that commute with each other, the variables $t_{1},t_{2},\dots$,
and the letters in $A$. Set $w_{a}^{(i,j)}=0$ if $(a,(i,j))\notin P$.
Given $\alpha=a_{1}a_{2}\cdots a_{k}\in A^{*}$ and $1\leq i,j\leq m$,
let $w^{(i,j)}(\alpha)=w_{a_{1}}^{e_{1}}\cdots w_{a_{k}}^{e_{k}}$
if there exists $\mu=(a_{1},e_{1})\cdots(a_{k},e_{k})\in\overrightarrow{P^{*}}$
such that $E(\mu)=(i,j)$ and $\rho(\mu)=\alpha$. 

Define the map $\hat{\lambda}:P^{*}\rightarrow\Mat_{m}(K\langle\langle A^{*}\rangle\rangle)$
by sending $p=(a,(i,j))$ to the matrix $\hat{M_{p}}$ with $w_{a}^{(i.j)}a$
in the $(i,j)$ entry and 0 everywhere else. If $\mu=(a_{1},e_{1})\cdots(a_{n},e_{n})\in\overrightarrow{P^{*}}$
and $E(\mu)=(i,j)$, then $\hat{\lambda}(\mu)$\textemdash which we
also denote $\hat{M}_{\mu}$\textemdash has $w_{a_{1}}^{e_{1}}\cdots w_{a_{n}}^{e_{n}}\rho(\mu)$
in the $(i,j)$ entry and 0 everywhere else, and if $\mu\notin\overrightarrow{P^{*}}$
then $\hat{M}_{\mu}=0_{m}$. Again, $\hat{\lambda}$ extends to a
homomorphism $K\langle\langle P^{*}\rangle\rangle\rightarrow\Mat_{m}(K\langle\langle A^{*}\rangle\rangle)$,
which we also call $\hat{\lambda}$. Note that setting all of the
weights equal to 1 gives $\hat{\lambda}=\lambda$.
\begin{thm}[Goulden\textendash Jackson cluster method for monoid networks, weighted
version]
\label{t-gjcmmnw} Let $A$ be an alphabet and let $B=\left\{ \beta_{1},\beta_{2},\dots\right\} \subseteq A^{*}$
be a set of words of length at least 2; let $G$ be a digraph on $[m]$
and let $(G,P)$ be a monoid network on $A^{*}$; let $\hat{F}_{G}(t_{1},t_{2},\dots)$
be the $m\times m$ matrix whose $(i,j)$ entry is the sum 
\[
\sum_{\mu}w^{(i,j)}(\rho(\mu))\rho(\mu)\prod_{k=1}^{\infty}t_{k}^{\occ_{k}(\rho(\mu))}
\]
over all $\mu\in\overrightarrow{P^{*}}$ with $E(\mu)=(i,j)$; and
let 
\[
\hat{L}_{G}(t_{1},\dots,t_{k})\coloneqq\sum_{\mu\in\overrightarrow{P^{*}}}\hat{M}_{\mu}\sum_{c\in C_{\mu}}\prod_{k=1}^{\infty}t_{k}^{\mk_{k}(c)}.
\]
Then, 
\[
\hat{F}_{G}(t_{1},t_{2}\dots)=\bigg(I_{m}-\sum_{p\in P}\hat{M}_{p}-\hat{L}_{G}(t_{1}-1,t_{2}-1,\dots)\bigg)^{-1}.
\]
\end{thm}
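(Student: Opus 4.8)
The plan is to prove Theorem~\ref{t-gjcmmnw} by running the argument for Theorem~\ref{t-gjcmmn} verbatim, with the homomorphism $\lambda$ replaced throughout by its weighted counterpart $\hat{\lambda}$. Concretely, I would first apply the original cluster method (Theorem~\ref{t-gjcm2}) to the free monoid $P^{*}$ with forbidden set $\overrightarrow{B}$, attaching the indeterminate $t_{u}$ to every word in $\overrightarrow{B}_{\negmedspace u}$. This produces, in $K\langle\langle P^{*}\rangle\rangle[[t_{1},t_{2},\dots]]$, the identity
\[
\sum_{\alpha\in P^{*}}\alpha\prod_{k=1}^{\infty}t_{k}^{\bad_{k}(\alpha)}=\bigg(1-\sum_{p\in P}p-L(t_{1}-1,t_{2}-1,\dots)\bigg)^{-1},
\]
where $L$ denotes the cluster generating function over $P^{*}$. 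Applying $\hat{\lambda}$ to both sides will then yield the asserted matrix identity, provided $\hat{\lambda}$ carries each ingredient on the right to its hatted matrix counterpart and commutes with the inversion.

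The first point to verify is that $\hat{\lambda}$ is a genuine algebra homomorphism $K\langle\langle P^{*}\rangle\rangle\to\Mat_{m}(K\langle\langle A^{*}\rangle\rangle)$, for which it suffices to check multiplicativity on generators. For $p=(a,(i,j))$ and $q=(b,(k,l))$ in $P$, the product $\hat{M}_{p}\hat{M}_{q}$ is nonzero exactly when $j=k$, in which case its single nonzero entry, in position $(i,l)$, equals $w_{a}^{(i,j)}a\cdot w_{b}^{(j,l)}b$. Since the weights commute with the letters, this is $w_{a}^{(i,j)}w_{b}^{(j,l)}\,ab$, which is precisely the nonzero entry of $\hat{M}_{pq}$ when $pq\in\overrightarrow{P^{*}}$; when $j\neq k$ both products vanish. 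The only new feature relative to $\lambda$ is the accumulation of weights along a walk, and the defining product $w^{(i,j)}(\alpha)=w_{a_{1}}^{e_{1}}\cdots w_{a_{k}}^{e_{k}}$ is unambiguous precisely because the monoid network property furnishes at most one walk $\beta$ with $E(\beta)=(i,j)$ and $\rho(\beta)=\alpha$.

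It then remains to identify the four images. Clearly $\hat{\lambda}(1)=I_{m}$ and $\hat{\lambda}\big(\sum_{p}p\big)=\sum_{p}\hat{M}_{p}$. For the left-hand side, only $\alpha\in\overrightarrow{P^{*}}$ contribute, and for such $\alpha$ each occurrence of a word from $\overrightarrow{B}_{\negmedspace k}$ is a consecutive subword projecting bijectively to an occurrence of $\beta_{k}$ in $\rho(\alpha)$, so $\bad_{k}(\alpha)=\bad_{k}(\rho(\alpha))$. Grouping by $E(\alpha)=(i,j)$ and reading off entry $(i,j)$---where $\hat{M}_{\alpha}$ carries the factor $w^{(i,j)}(\rho(\alpha))\,\rho(\alpha)$---reproduces exactly the defining sum for $\hat{F}_{G}$; the same computation applied termwise to the cluster sum gives $\hat{\lambda}(L(\dots))=\hat{L}_{G}(\dots)$. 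Finally, since the element being inverted is $I_{m}$ minus a matrix of terms of positive length and is therefore invertible in the matrix power series algebra, the homomorphism $\hat{\lambda}$ commutes with its inversion, and the proof concludes. I expect the one genuine obstacle to be the multiplicativity check for $\hat{\lambda}$: once one confirms that the arc-dependent weights accumulate correctly along walks, everything else is a routine transcription of the unweighted proof.
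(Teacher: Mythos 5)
Your proposal is correct and follows exactly the paper's own route: the paper proves Theorem~\ref{t-gjcmmnw} by repeating the proof of Theorem~\ref{t-gjcmmn}---apply Theorem~\ref{t-gjcm2} to the alphabet $P$ with forbidden set $\overrightarrow{B}$, attaching $t_{u}$ to each word in $\overrightarrow{B}_{\negmedspace u}$---with $\hat{\lambda}$ in place of $\lambda$. Your additional checks (multiplicativity of $\hat{\lambda}$ on generators, the identification $\bad_{k}(\alpha)=\bad_{k}(\rho(\alpha))$ for $\alpha\in\overrightarrow{P^{*}}$, and compatibility of $\hat{\lambda}$ with inversion) are details the paper leaves implicit, so they strengthen rather than diverge from the argument.
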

\noindent The proof is the same as that of Theorem \ref{t-gjcmmn},
except that we apply $\hat{\lambda}$ instead of $\lambda$.

Although we will not use the weighted version of our main theorem
in subsequent sections, we note that it can be used with the monoid
network framework to examine time-homogeneous Markov chains, which
are probabilistic analogues of finite-state automata. Specifically,
let $(G,P)$ be a monoid network with $m$ vertices, and for every
$a\in A$ and $i,j\in[m]$, let $w_{a}^{(i,j)}\in[0,1]$ such that
$w_{a}^{(i,j)}=0$ if $(a,(i,j))\notin P$ and 
\[
\sum_{j=1}^{m}\sum_{a\in A}w_{a}^{(i,j)}=1
\]
for each fixed $1\leq i\leq m$. With a choice of initial vertex and
terminal vertex, we can think of this monoid network as a random word
model, where a word is given by traversing a random walk in $G$ from
the initial vertex to the terminal vertex with $w_{a}^{(i,j)}$ being
the probability that at vertex $i$, the next letter in the word will
be $a$ and the next arc $(i,j)$. Using Theorem \ref{t-gjcmmnw},
we can then compute probabilities associated with this random process,
such as the probability that a length $n$ word obtained from traversing
a walk between two specified vertices avoids a specified set of forbidden
subwords.

\section{An application to lattice path enumeration}

\subsection{Representing lattice paths using monoid networks}

A \textit{path} on $\mathbb{Z}^{k}$ with \textit{steps} in $S\subseteq\mathbb{Z}^{k}$
is an ordered tuple $(a_{0},a_{1},a_{2},\dots,a_{n})$ of values in
$\mathbb{Z}^{k}$ such that $a_{i+1}-a_{i}\in S$ for every $0\leq i<n$.
Equivalently, it is an ordered tuple $(s_{1},s_{2},\dots,s_{n})$
of values in $S$. Each step $s\in S$ is assigned a length in $\mathbb{Z}$\textemdash which
we take to be 1 unless otherwise noted\textemdash and the \textit{length}
of a path is the sum of the lengths of all of its steps $s_{i}$.

These paths are collectively known as lattice paths. In particular,
lattice paths on $\mathbb{Z}$ have been widely studied in the literature,
usually with the conditions $a_{0}=a_{n}=0$ and $a_{i}\geq0$ for
every $i$. Examples of these paths include \textit{Dyck paths}, which
have steps in $\{-1,1\}$; \textit{Motzkin paths}, which have steps
in $\{-1,0,1\}$; and \textit{Schr\"oder paths}, which are Motzkin
paths but with `0' steps having length 2 instead of 1. These paths
are often illustrated as paths in the plane starting at the origin,
ending on the $x$-axis, and never going below the $x$-axis, with
up steps $(1,1)$ corresponding to $1$, down steps $(1,-1)$ corresponding
to $-1$, and in the case of Motzkin or Schr\"oder paths, flat steps
$(1,0)$ or $(2,0)$, respectively, corresponding to 0. See Figure
2 for an example.\begin{center}
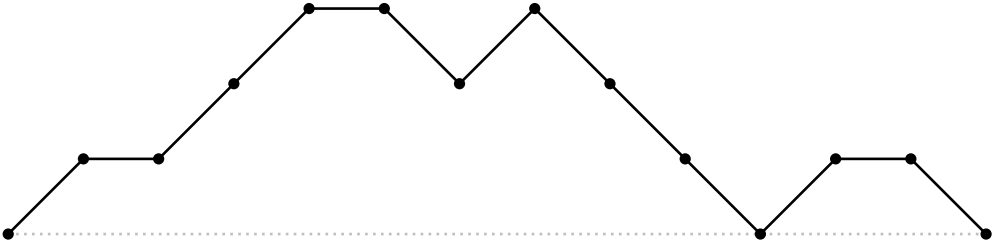

\begin{tikzpicture}[scale=1] 
\draw [line width=0] (4,2); 
\draw[pathcolorlight] (0,0) -- (13,0); 
\drawlinedots{0,1,2,3,4,5,6,7,8,9,10,11,12,13}{0,1,1,2,3,3,2,3,2,1,0,1,1,0} 
\end{tikzpicture}
\captionof{figure}{The Motzkin path corresponding to $UFUUFDUDDDUFD$}
\end{center}

We say that a lattice path on $\mathbb{Z}$ has \textit{height bounded
by} $m$ if we add the condition that $a_{i}\leq m$ for every $i$.
Lattice paths with bounded heights correspond to walks in certain
monoid networks. For example, a Dyck path with height bounded by $m$
is a walk from vertex 0 to itself in the monoid network in Figure
3.

\noindent \begin{center}
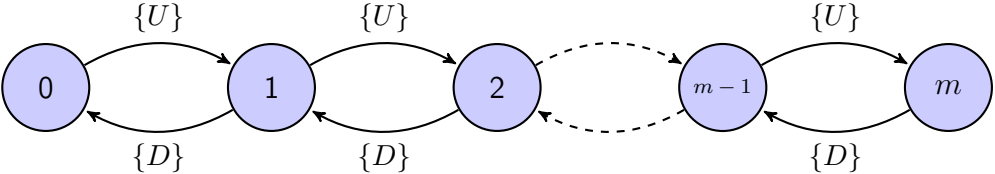

\begin{tikzpicture}[->,>=stealth',shorten >=1pt,auto,node distance=3cm,   thick,main node/.style={circle,fill=blue!20,draw,font=\sffamily, minimum size=2.8em}]

\node[main node] (1) {0};
\node[main node] (2) [right of =1] {1};
\node[main node] (3) [right of =2] {2};
\node[main node] (4) [right of =3] {{\scriptsize{$m-1$}}{\small \par}};
\node[main node] (5) [right of =4] {$m$};

\path[every node/.style={font=\sffamily\small}]     
(1) edge [bend left] node {$\{U\}$} (2)    
(2) edge [bend left] node {$\{U\}$} (3)
edge [bend left] node {$\{D\}$} (1)
(3) edge [bend left] node {$\{D\}$} (2)
edge [dashed, bend left] node {} (4)
(4) edge [bend left] node {$\{U\}$} (5)
edge [dashed, bend left] node {} (3)
(5) edge [bend left] node {$\{D\}$} (4);

\end{tikzpicture}
\captionof{figure}{Dyck path monoid network}
\end{center}Here the alphabet is $\{U,D\}$, with $U$ corresponding to an up
step and $D$ corresponding to a down step. The vertices represent
the possible heights at each step of the path; indeed, a Dyck path
with height bounded by $m$ must begin and end at height 0, and its
height must stay between 0 and $m$.

We can also add a letter $F$ for flat steps, and so we can represent
Motzkin paths and Schr\"oder paths using the monoid network in Figure
4.

\noindent \begin{center}
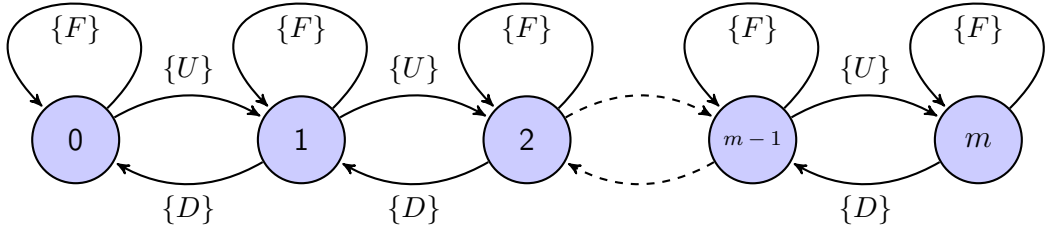

\begin{tikzpicture}[->,>=stealth',shorten >=1pt,auto,node distance=3cm,   thick,main node/.style={circle,fill=blue!20,draw,font=\sffamily, minimum size=2.8em}]

\node[main node] (1) {0};
\node[main node] (2) [right of =1] {1};
\node[main node] (3) [right of =2] {2};
\node[main node] (4) [right of =3] {{\scriptsize{$m-1$}}{\small \par}};
\node[main node] (5) [right of =4] {$m$};

\path[every node/.style={font=\sffamily\small}]     
(1) edge [bend left] node {$\{U\}$} (2)    
edge [loop] node {$\{F\}$} (1)
(2) edge [bend left] node {$\{U\}$} (3)
edge [bend left] node {$\{D\}$} (1)
edge [loop] node {$\{F\}$} (2)
(3) edge [bend left] node {$\{D\}$} (2)
edge [dashed, bend left] node {} (4)
edge [loop] node {$\{F\}$} (3)
(4) edge [bend left] node {$\{U\}$} (5)
edge [dashed, bend left] node {} (3)
edge [loop] node {$\{F\}$} (4)
(5) edge [bend left] node {$\{D\}$} (4)
edge [loop] node {$\{F\}$} (5);

\end{tikzpicture}
\captionof{figure}{Motzkin path monoid network}
\end{center}

Using monoid networks, we can model a wide variety of bounded lattice
paths with different types of steps and various restrictions, so we
may use the tools that we have for monoid networks to obtain generating
functions for counting lattice paths of bounded height. Taking the
formal power series limit as $m\rightarrow\infty$ yields analogous
results for lattice paths of unbounded height.

The idea of representing lattice paths as walks in digraphs and the
transfer-matrix method are standard techniques in lattice path enumeration;
see \cite{Krattenthaler2015} for a recent survey of the literature.
Such an approach has not yet been combined with the Goulden\textendash Jackson
cluster method to count lattice paths by occurrences of subwords,
which we shall do here.

However, the original version of the cluster method was applied by
Wang \cite{Wang2011} to count Dyck paths by occurrences of various
subwords. Using his approach, each cluster formed from the subwords
that one wishes to keep track of is given its own step. For example,
to count Dyck paths by occurrences of $UUD$, the only cluster $UUD$
is replaced by the step $U^{\prime},$ which reduces the problem to
counting paths with steps $U$, $U^{\prime}$, and $D$ that start
at the origin, end on the $x$-axis, and never go below the $x$-axis.
More generally, Wang's method reduces the problem of counting Dyck
paths by occurrences of prescribed subwords to counting paths with
a larger set of steps satisfying the same restrictions, which can
be done by producing recursive decompositions for these paths and
solving the associated functional equations for their generating functions.

Some of our results can be obtained via Wang's method, but there are
two key differences between our method and Wang's. First, our method
allows us to only keep track of subwords that occur only at a prescribed
set of heights, which is not possible using Wang's approach. Moreover,
Wang's approach does not use the correspondence between lattice paths
and walks in digraphs, and also relies on recursive decompositions
of paths which may not always be easy to obtain; our method is more
systematic and reduces almost all of the computations to matrix algebra.
Because Wang conducted his investigation on Dyck paths, we shall instead
focus on Motzkin paths in this paper.

\subsection{A note on continued fractions}

A finite continued fraction is an expression of the form 
\[
a_{0}+\cfrac{b_{1}}{a_{1}+\cfrac{b_{2}}{\ddots+\cfrac{b_{m}}{a_{m}}}},
\]
which we write as 
\[
a_{0}+\frac{b_{1}}{a_{1}+}\:\frac{b_{2}}{a_{2}+}\cdots\frac{b_{m}}{a_{m}}
\]
for compactness. We say that a finite continued fraction has \textit{depth}
$m$ if it is written with $m$ fraction bars when completely written
out in this notation, so the continued fraction above has depth $m$.
We write an infinite continued fraction 
\[
a_{0}+\cfrac{b_{1}}{a_{1}+\cfrac{b_{2}}{a_{2}+\ddots}}
\]
as 
\[
a_{0}+\frac{b_{1}}{a_{1}+}\:\frac{b_{2}}{a_{2}+}\cdots.
\]

Continued fractions arise naturally in combinatorics and especially
in lattice path enumeration; e.g., see Flajolet's landmark paper \cite{Flajolet1980}.
Many of our results in this section are continued fraction formulas.

\subsection{Counting Motzkin paths by ascents}

Let ${\cal M}_{n}^{m}$ be the set of Motzkin paths of length $n$
with height bounded by $m$ and ${\cal M}_{n}$ the set of all Motzkin
paths of length $n$. An \textit{ascent} of a Motzkin path $\mu$
is a maximal consecutive sequence of up steps in $\mu$, and let $\asc(\mu)$
be the number of ascents in $\mu$. We also define 
\[
F_{m}^{\asc}(x,t)\coloneqq\sum_{n=0}^{\infty}\sum_{\mu\in\mathcal{M}_{n}^{m}}t^{\asc(\mu)}x^{n}\qquad\mbox{and}\qquad F^{\asc}(x,t)\coloneqq\sum_{n=0}^{\infty}\sum_{\mu\in\mathcal{M}_{n}}t^{\asc(\mu)}x^{n}
\]
to be bivariate generating functions for Motzkin paths with height
bounded by $m$ and regular Motzkin paths, respectively, weighted
by length and number of ascents. Our main result here is the following
theorem.
\begin{thm}
\label{t-ascents} Let $\{P_{m}^{\asc}(x,t)\}_{m\geq0}$ be the sequence
of polynomials defined by $P_{0}^{\asc}(x,t)=1$, $P_{1}^{\asc}(x,t)=1-x$,
and
\[
P_{m}^{\asc}(x,t)=(1-x-x^{2}(t-1))P_{m-1}^{\asc}(x,t)-(x^{2}+x^{3}(t-1))P_{m-2}^{\asc}(x,t)
\]
for $m\geq2$. Then 
\begin{align*}
F_{m}^{\asc}(x,t) & =\frac{P_{m}^{\asc}(x,t)}{P_{m+1}^{\asc}(x,t)}\\
 & =\underset{\mathrm{depth}\mbox{ }m+1}{\underbrace{\frac{1}{1-x-x^{2}(t-1)-}\:\frac{x^{2}+x^{3}(t-1)}{1-x-x^{2}(t-1)-}\cdots\frac{x^{2}+x^{3}(t-1)}{1-x-x^{2}(t-1)-}\:\frac{x^{2}+x^{3}(t-1)}{1-x}}}
\end{align*}
for $m\geq1$ and 
\begin{align*}
F^{\asc}(x,t) & =\frac{1}{1-x-x^{2}(t-1)-}\:\frac{x^{2}+x^{3}(t-1)}{1-x-x^{2}(t-1)-}\:\frac{x^{2}+x^{3}(t-1)}{1-x-x^{2}(t-1)-}\cdots\\
 & =\frac{1-x-x^{2}(t-1)-\sqrt{1-2x-x^{2}(2t+1)-2x^{3}(t-1)+x^{4}(t-1)^{2}}}{2(x^{2}+x^{3}(t-1))}.
\end{align*}
\end{thm}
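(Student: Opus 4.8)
The plan is to realize both $F_m^{\asc}$ and $F^{\asc}$ through the monoid network cluster method (Theorem~\ref{t-gjcmmn}) applied to the Motzkin monoid network of order $m$, using the key identity
\[
\asc\mu=\#_U(\mu)-\#_{UU}(\mu),
\]
where $\#_U(\mu)$ is the number of up steps and $\#_{UU}(\mu)$ the number of occurrences of the subword $UU$; this holds because an ascent of length $\ell$ contains $\ell$ up steps and $\ell-1$ occurrences of $UU$. So I would take $B=\{UU\}$ with a single cluster indeterminate $r$, so that Theorem~\ref{t-gjcmmn} gives $\overrightarrow F_{\!\!G}(r)=\bigl(I-\sum_p M_p-\overrightarrow L_{\negmedspace G}(r-1)\bigr)^{-1}$, an $(m+1)\times(m+1)$ matrix tracking $r^{\#_{UU}}$. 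The clusters are immediate: since $UU$ overlaps itself by only one letter, the only clusters are the words $U^k$ ($k\ge 2$) with all $k-1$ copies of $UU$ marked, so $\overrightarrow L_{\negmedspace G}(r-1)$ has $(i,i+k)$-entry $U^k(r-1)^{k-1}$ whenever $i+k\le m$. Applying the homomorphism $\phi\colon U\mapsto xt,\ D\mapsto x,\ F\mapsto x$ and setting $r=1/t$ turns the $(0,0)$-entry into $\sum_\mu x^{\lvert\mu\rvert}t^{\#_U-\#_{UU}}=\sum_\mu x^{\lvert\mu\rvert}t^{\asc\mu}$, so that $F_m^{\asc}(x,t)=[(\phi N)^{-1}]_{0,0}$, where $N=I-\sum_p M_p-\overrightarrow L_{\negmedspace G}(r-1)$.

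Second, I would write out $N'=\phi(N)$ explicitly. Combining the up-step term from $\sum_p M_p$ (the first superdiagonal) with the cluster terms, every $k$th superdiagonal entry unifies to $-x^k t(1-t)^{k-1}$ for all $k\ge 1$, while the diagonal is $1-x$ and the subdiagonal is $-x$. Thus $N'$ is only \emph{upper Hessenberg}, not tridiagonal, and this is the main obstacle: the self-overlap of $UU$ generates long-range entries on every superdiagonal, so the clean three-term structure of the claimed continued fraction is invisible from $N'$ directly. The idea to overcome this is to resum these entries. Writing $J$ for the superdiagonal shift, the upper-triangular part equals the geometric series $-xtJ(I-x(1-t)J)^{-1}$; right-multiplying $N'$ by the unipotent matrix $I-x(1-t)J$ (of determinant $1$) collapses the long-range terms. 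Using $J^{\mathsf T}J=\operatorname{diag}(0,1,\dots,1)$, a short computation yields
\[
N'\bigl(I-x(1-t)J\bigr)=\mathcal T_m,
\]
where $\mathcal T_m$ is the \emph{tridiagonal} matrix with diagonal $1-x-x^2(t-1)$ (except the $(0,0)$-entry, which is $1-x$), superdiagonal $-x(1+x(t-1))$, and subdiagonal $-x$.

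Third, with the tridiagonalization in hand the recurrence is routine. Since $\det(I-x(1-t)J)=1$, we have $\det N'=\det\mathcal T_m$; and deleting row and column $0$ from $N'$ leaves exactly the order-$(m-1)$ matrix (the network is height-translation invariant), so by Cramer's rule $F_m^{\asc}=\det N'_{(m-1)}/\det N'_{(m)}$. Expanding $\det\mathcal T_m$ from its generic end shows it satisfies the stated three-term recurrence---the generic diagonal entry contributes the factor $1-x-x^2(t-1)$ and the product of off-diagonals contributes $x^2+x^3(t-1)$---with initial data $\det N'_{(-1)}=1=P_0^{\asc}$ and $\det N'_{(0)}=1-x=P_1^{\asc}$, the corner entry $1-x$ supplying the boundary. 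Hence $\det N'_{(m)}=P_{m+1}^{\asc}$ and $F_m^{\asc}=P_m^{\asc}/P_{m+1}^{\asc}$. Unrolling the ratio $P_m^{\asc}/P_{m+1}^{\asc}$ of consecutive solutions in the usual way then produces the depth-$(m+1)$ continued fraction, whose innermost layer $1-x$ is $P_1^{\asc}/P_0^{\asc}$.

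Finally, for the unbounded case I would pass to the formal power series limit $m\to\infty$, under which the finite continued fractions converge to the infinite one. Writing $a=1-x-x^2(t-1)$ and $b=x^2+x^3(t-1)$, self-similarity of the infinite continued fraction gives the quadratic $bF^2-aF+1=0$ for $F=F^{\asc}$; solving and taking the branch with $F(0,t)=1$ yields $F^{\asc}=(a-\sqrt{a^2-4b})/(2b)$, and a direct expansion of $a^2-4b$ recovers the radicand $1-2x-x^2(2t+1)-2x^3(t-1)+x^4(t-1)^2$. The one subtlety to flag is the substitution $r=1/t$: although it introduces negative powers of $t$ at intermediate stages, each ascent contributes net exponent $\ell-(\ell-1)=1$, so every coefficient is ultimately a nonnegative power of $t$ and all manipulations are valid in the appropriate formal power series ring.
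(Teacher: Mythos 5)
Your proposal is correct, but it takes a genuinely different route from the paper's. The paper exploits the fact that a Motzkin path never ends with an up step, so each ascent ends in exactly one occurrence of $UD$ or $UF$; it therefore applies Theorem \ref{t-gjcmmn} with $B=\{UD,UF\}$, both weighted by $t$. Since neither word overlaps itself or the other, the only clusters are the two words themselves, the cluster matrix is bidiagonal (with $UDt$ on the diagonal and $UFt$ on the superdiagonal, and a zero last row since no marked word can start at height $m$), and after the unweighted substitution $U,D,F\mapsto x$ the matrix $A_{m}-\overrightarrow{L}_{\negmedspace G}(t-1)$ is \emph{already} tridiagonal; the three-term determinant recurrence, the continued fraction, the limit $m\rightarrow\infty$, and the quadratic then proceed exactly as in your third and fourth steps, the only cosmetic difference being that the paper's exceptional diagonal entry $1-x$ sits at the bottom-right corner rather than at the top-left as in your $\mathcal{T}_{m}$. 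Your encoding $B=\{UU\}$ with $\asc\mu=\#_{U}(\mu)-\#_{UU}(\mu)$, the weighting $U\mapsto xt$, and the specialization $r=1/t$ is a legitimate alternative: the self-overlaps of $UU$ force the clusters $U^{k}$ with all $k-1$ marks, the resulting matrix is upper Hessenberg and Toeplitz, and your right-multiplication by the unipotent $I-x(1-t)J$ does collapse it correctly (I verified the coefficient computation $(1-x)x(1-t)+xt=x+x^{2}(t-1)$ and the $J^{\mathsf{T}}J$ correction); your tridiagonal matrix is the paper's $M_{m}$ up to transposition and reversal of the index order, so the determinants, hence the polynomials $P_{m}^{\asc}$, coincide. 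What the paper's choice of $B$ buys is economy: no nontrivial clusters, no weighted homomorphism, no Laurent detour through $1/t$, and no tridiagonalization step. What your approach buys is a demonstration that the generalized cluster method handles self-overlapping patterns, where the cluster generating function is a genuine geometric series rather than a single term---the situation the paper only meets later, in Theorems \ref{t-plateau} and \ref{t-pkval}. The one point to state more carefully in a final write-up is the $r=1/t$ substitution: the clean way to say it is that the identity of Theorem \ref{t-gjcmmn} lives over $K[t,r][[x]]$ after applying the letter homomorphism, and the evaluation $r\mapsto1/t$ is a continuous ring homomorphism into $K[t,t^{-1}][[x]]$ under which the matrix stays invertible (its constant term in $x$ is the identity), with the final $(0,0)$ entry landing in $K[t][[x]]$ because $\asc\mu\geq0$---which is essentially the justification you sketched.
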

\begin{proof}
We apply the cluster method to the Motzkin path monoid network with
$B=\{UD,UF\}$, since the number of occurrences of the subwords $UD$
and $UF$ in a Motzkin path is equal to its number of ascents. We
weight both $UD$ and $UF$ by $t$. The only clusters formed by $UD$
and $UF$ are themselves, and so we have the $(m+1)\times(m+1)$ cluster
matrix 
\[
\overrightarrow{L}_{\negmedspace G}(t)=\begin{bmatrix}UDt & UFt\\
 & UDt & UFt\\
 &  & UDt & \ddots\\
 &  &  & \ddots & \ddots\\
 &  &  &  & UDt & UFt\\
 &  &  &  &  & 0
\end{bmatrix}.
\]
Then, by Theorem \ref{t-gjcmmn}, $\overrightarrow{F}_{\negthinspace\negthinspace G}(t)$
is the inverse matrix of $A_{m}-\overrightarrow{L}_{\negmedspace G}(t-1)$,
where $A_{m}$ is the $(m+1)\times(m+1)$ matrix given by 
\[
A_{m}=\begin{bmatrix}1-F & -U\\
-D & 1-F & -U\\
 & -D & 1-F & \ddots\\
 &  & \ddots & \ddots & \ddots\\
 &  &  & \ddots & 1-F & -U\\
 &  &  &  & -D & 1-F
\end{bmatrix}.
\]
Thus, $F_{m}^{\asc}(x,t)$ is the $(1,1)$ entry of $M_{m}^{-1}$
where $M_{m}$ is the $(m+1)\times(m+1)$ matrix {\footnotesize{}
\[
M_{m}=\begin{bmatrix}1-x-x^{2}(t-1) & -x-x^{2}(t-1)\\
-x & 1-x-x^{2}(t-1) & -x-x^{2}(t-1)\\
 & -x & 1-x-x^{2}(t-1) & \ddots\\
 &  & \ddots & \ddots & \ddots\\
 &  &  & \ddots & 1-x-x^{2}(t-1) & -x-x^{2}(t-1)\\
 &  &  &  & -x & 1-x
\end{bmatrix}
\]
}obtained by applying the homomorphism $U,D,F\mapsto x$ to $A_{m}-\overrightarrow{L}_{\negmedspace G}(t-1)$.{\small{}
}By Cramer's rule, we can compute this generating function as the
quotient of two determinants 
\[
F_{m}^{\asc}(x,t)=\frac{\det M_{m-1}}{\det M_{m}}.
\]
Using column-addition matrix operations, which preserve the determinant,
we can then transform $M_{m}$ into an upper-triangular matrix with
diagonal entries
\[
u_{i,i}=\begin{cases}
1-x-x^{2}(t-1)-\cfrac{x^{2}+x^{3}(t-1)}{u_{i+1,i+1}}, & \mbox{if }1\leq i\leq m\\
1-x, & \mbox{if }i=m+1.
\end{cases}
\]
From here we deduce the recursive expression 
\begin{align*}
\det M_{m}=\prod_{i=1}^{m+1}u_{i,i} & =\left(1-x-x^{2}(t-1)-\frac{x^{2}+x^{3}(t-1)}{\left(\frac{\det M_{m-1}}{\det M_{m-2}}\right)}\right)\det M_{m-1}\\
 & =(1-x-x^{2}(t-1))\det M_{m-1}-(x^{2}+x^{3}(t-1))\det M_{m-2}
\end{align*}
with initial conditions $\det M_{-1}=1$ and $\det M_{0}=1-x$. Hence,
these determinants are polynomials in $x$ and $t$, and we write
$P_{m}^{\asc}(x,t)=\det M_{m-1}$. Moreover, 
\[
\frac{\det M_{m}}{\det M_{m-1}}=\underset{\mathrm{depth}\mbox{ }m}{\underbrace{1-x-x^{2}(t-1)-\frac{x^{2}+x^{3}(t-1)}{1-x-x^{2}(t-1)-}\cdots\frac{x^{2}+x^{3}(t-1)}{1-x-x^{2}(t-1)-}\:\frac{x^{2}+x^{3}(t-1)}{1-x}}},
\]
so 
\begin{equation}
F_{m}^{\asc}(x,t)=\underset{\mathrm{depth}\mbox{ }m+1}{\underbrace{\frac{1}{1-x-x^{2}(t-1)-}\:\frac{x^{2}+x^{3}(t-1)}{1-x-x^{2}(t-1)-}\cdots\frac{x^{2}+x^{3}(t-1)}{1-x-x^{2}(t-1)-}\:\frac{x^{2}+x^{3}(t-1)}{1-x}}}.\label{e-asccf1}
\end{equation}
We now proceed to Motzkin paths unbounded by height. By taking the
limit of (\ref{e-asccf1}) as $m\rightarrow\infty$, this sequence
of formal power series converges to the infinite continued fraction
\begin{align}
F^{\asc}(x,t) & =\frac{1}{1-x-x^{2}(t-1)-}\:\frac{x^{2}+x^{3}(t-1)}{1-x-x^{2}(t-1)-}\:\frac{x^{2}+x^{3}(t-1)}{1-x-x^{2}(t-1)-}\cdots.\label{e-asccf2}
\end{align}
Equation (\ref{e-asccf2}) gives the recursive expression 
\[
F^{\asc}(x,t)=\frac{1}{1-x-x^{2}(t-1)-(x^{2}+x^{3}(t-1))F^{\asc}(x,t)}
\]
or 
\[
(x^{2}+x^{3}(t-1))F^{\asc}(x,t)^{2}-(1-x-x^{2}(t-1))F^{\asc}(x,t)+1=0,
\]
and solving this functional equation gives 
\[
F^{\asc}(x,t)=\frac{1-x-x^{2}(t-1)\pm\sqrt{1-2x-x^{2}(2t+1)-2x^{3}(t-1)+x^{4}(t-1)^{2}}}{2(x^{2}+x^{3}(t-1))}
\]
but one can easily check that the solution given by the minus sign
is the correct one.
\end{proof}
The first several terms of $F^{\asc}(x,t)$ are in the following table:

\begin{center}
\begin{tabular}{c|c|c|c}
$n$ & $[x^{n}]\,F^{\asc}(x,t)$ & $n$ & $[x^{n}]\,F^{\asc}(x,t)$\tabularnewline
\hline 
0 & $1$ & 5 & $1+14t+6t^{2}$\tabularnewline
1 & $1$ & 6 & $1+26t+23t^{2}+t^{3}$\tabularnewline
2 & $1+t$ & 7 & $1+46t+70t^{2}+10t^{3}$\tabularnewline
3 & $1+3t$ & 8 & $1+79t+186t^{2}+56t^{3}+t^{4}$\tabularnewline
4 & $1+7t+t^{2}$ & 9 & $1+133t+451t^{2}+235t^{3}+15t^{4}$\tabularnewline
\end{tabular}
\par\end{center}

\noindent These numbers are in the OEIS \cite[A114580]{oeis}. Notice
that the constant terms of these polynomials are all 1; the only Motzkin
paths with no ascents consist of all flat steps, and there is exactly
one of each length. We also obtain an expression for the linear coefficients,
which count Motzkin paths with exactly one ascent.
\begin{cor}
Let $\Fib(n)$ denote the $n$th Fibonacci number defined by $\Fib(0)=0$,
$\Fib(1)=1$, and $\Fib(n)=\Fib(n-1)+\Fib(n-2)$ for $n\geq2$. Then
the number of Motzkin paths of length $n\geq1$ with exactly one ascent
is equal to $\Fib(n+3)-n-2$.
\end{cor}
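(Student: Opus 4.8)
The plan is to read off the number of length-$n$ Motzkin paths with exactly one ascent as the coefficient of $t^{1}x^{n}$ in $F^{\asc}(x,t)$, since $t$ marks $\asc\mu$ in $F^{\asc}(x,t)=\sum_{\mu}t^{\asc\mu}x^{l(\mu)}$. So I would write $F^{\asc}(x,t)=\sum_{j\geq0}F_{j}(x)\,t^{j}$ and aim to compute the single univariate series $F_{1}(x)$, whose coefficient of $x^{n}$ is exactly the quantity to be identified with $\Fib(n+3)-n-2$.

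To extract $F_{1}$ cleanly I would avoid differentiating the explicit radical and instead use the quadratic functional equation satisfied by $F=F^{\asc}(x,t)$ derived in the proof of Theorem~\ref{t-ascents}, namely $(x^{2}+x^{3}(t-1))F^{2}-(1-x-x^{2}(t-1))F+1=0$. Setting $t=0$ kills every path with a positive number of ascents, leaving only the all-flat paths, so $F_{0}=F^{\asc}(x,0)=1/(1-x)$ (one can confirm this directly against the quadratic). Differentiating the functional equation with respect to $t$, setting $t=0$, and substituting $F_{0}=1/(1-x)$ produces a \emph{linear} equation in $F_{1}$, whose solution simplifies to $F_{1}(x)=\dfrac{x^{2}}{(1-x)^{2}(1-x-x^{2})}$. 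As an independent check, the same series arises combinatorially: a Motzkin path with exactly one ascent must be of the form $F^{a}U^{k}$ (with $k\geq1$) followed by an arbitrary shuffle of $k$ down steps and $b$ flat steps, giving the count $\sum_{a+2k+b=n}\binom{k+b}{k}$ and, after summing the geometric and binomial series, the same generating function.

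It then remains to extract $[x^{n}]F_{1}$. Starting from the Fibonacci generating function $\sum_{n\geq0}\Fib(n)x^{n}=x/(1-x-x^{2})$, I would record the shifted identities $\sum_{n\geq0}\Fib(n+3)x^{n}=(2+x)/(1-x-x^{2})$ and $\sum_{n\geq0}(n+2)x^{n}=(2-x)/(1-x)^{2}$, and then verify the partial-fraction identity
\[
\frac{x^{2}}{(1-x)^{2}(1-x-x^{2})}=\frac{2+x}{1-x-x^{2}}-\frac{2-x}{(1-x)^{2}},
\]
which follows by clearing denominators, as the numerator collapses to $x^{2}$. Reading off coefficients gives $[x^{n}]F_{1}=\Fib(n+3)-(n+2)$ for all $n\geq0$, which is the asserted count for $n\geq1$.

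The only genuine obstacle is obtaining $F_{1}$ without error: differentiating the explicit square-root formula for $F^{\asc}$ is messy, so the implicit functional-equation route (or the combinatorial derivation) is preferable and much shorter. The concluding partial-fraction step is routine; the single point requiring care is pinning down the correct Fibonacci shift, which is easily confirmed against the small-$n$ values $0,1,3,7,14,\dots$ appearing in the preceding table.
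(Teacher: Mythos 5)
Your proposal is correct, and it reaches the same key intermediate object as the paper---the series $\frac{x^{2}}{(1-x)^{2}(1-x-x^{2})}$ as the coefficient of $t$ in $F^{\asc}(x,t)$---but it gets there and finishes differently. The paper computes $\left[\partial_{t}F^{\asc}(x,t)\right]_{t=0}$ by differentiating the closed-form (radical) expression with Maple, and then \emph{cites} the known OEIS fact that $x/\bigl((1-x-x^{2})(1-x)^{2}\bigr)$ generates $\Fib(n+4)-n-3$, shifting indices to conclude. You instead differentiate the quadratic functional equation $(x^{2}+x^{3}(t-1))F^{2}-(1-x-x^{2}(t-1))F+1=0$ implicitly at $t=0$ (using $F_{0}=1/(1-x)$), which yields a linear equation whose solution is the same series---and you back this up with a direct combinatorial decomposition $F^{a}U^{k}\cdot(\text{shuffle of }D^{k}\text{ and }F^{b})$, giving $\sum_{k\geq1}x^{2k}/(1-x)^{k+2}$ and hence the same generating function. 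You then replace the OEIS citation with the explicit partial-fraction identity
\[
\frac{x^{2}}{(1-x)^{2}(1-x-x^{2})}=\frac{2+x}{1-x-x^{2}}-\frac{2-x}{(1-x)^{2}},
\]
which proves the coefficient identity $\Fib(n+3)-(n+2)$ outright. The net effect is a fully self-contained proof requiring neither computer algebra nor an external sequence identification, at the cost of a little more algebra; the paper's route is shorter but leans on Maple and on a cited generating-function fact. Both arguments are sound, and your small-$n$ sanity check against the table values $0,1,3,7,14,\dots$ confirms the Fibonacci shift is the right one.
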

\begin{proof}
Using Maple, one may verify that 
\[
\left[\frac{\partial}{\partial t}F^{\asc}(x,t)\right]_{t=0}=\frac{x^{2}}{(1-x-x^{2})(1-x)^{2}}.
\]
It is known that 
\[
\frac{x}{(1-x-x^{2})(1-x)^{2}}
\]
is the generating function for the sequence $\Fib(n+4)-n-3$ (see
\cite[A001924]{oeis}). Then,
\begin{align*}
[x^{n}t]\,F^{\plt}(x,t) & =[x^{n}]\,\left[\frac{\partial}{\partial t}F^{\asc}(x,t)\right]_{t=0}\\
 & =[x^{n-1}]\,\frac{x}{(1-x-x^{2})(1-x)^{2}}\\
 & =\Fib(n+3)-n-2.\tag*{\qedhere}
\end{align*}
\end{proof}
\noindent The leading coefficients of the even-degree polynomials
are 1; a Motzkin path of length $2n$ has at most $n$ ascents, and
only when the path is $(UD)^{n}$. A Motzkin path of length $2n+1$
also has at most $n$ ascents, and we show that the leading coefficients
of the odd-degree polynomials are the triangular numbers.
\begin{prop}
The number of Motzkin paths of length $2n+1$ with $n$ ascents is
${n+2 \choose 2}$.
\end{prop}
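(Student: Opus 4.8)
The plan is to read off the desired count directly from the algebraic generating function $F^{\asc}(x,t)$ produced in Theorem \ref{t-ascents}. Since $F^{\asc}(x,t)=\sum_{\mu}t^{\asc\mu}x^{|\mu|}$, the number of Motzkin paths of length $2n+1$ with $n$ ascents is exactly $c_{n}:=[x^{2n+1}t^{n}]\,F^{\asc}(x,t)$, the coefficient of the \emph{top} power of $t$ in the length-$(2n+1)$ part (we have just observed that $n$ is the maximal number of ascents attainable at this length). So my goal reduces to finding a generating function for these diagonal coefficients $c_{n}$.

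The key device is the substitution $t\mapsto w/x^{2}$, which turns this diagonal into an ordinary coefficient extraction. Writing $f_{a,b}=[x^{a}t^{b}]\,F^{\asc}(x,t)$ and setting $H(x,w):=F^{\asc}(x,w/x^{2})$, I get $H(x,w)=\sum_{a,b}f_{a,b}\,x^{a-2b}w^{b}$. Because a length-$a$ Motzkin path has at most $\lfloor a/2\rfloor$ ascents, we have $f_{a,b}=0$ whenever $a<2b$, so no negative powers of $x$ survive and $H$ is an honest power series in $x$ and $w$. Consequently $[x^{1}]\,H(x,w)=\sum_{b}f_{2b+1,b}\,w^{b}=\sum_{b}c_{b}\,w^{b}$, and it suffices to compute the coefficient of $x^{1}$ in $H$.

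To do this I would feed the substitution into the quadratic functional equation for $F^{\asc}$ from Theorem \ref{t-ascents}. Using $x^{2}(t-1)=w-x^{2}$ and $x^{3}(t-1)=xw-x^{3}$, the equation $(x^{2}+x^{3}(t-1))F^{\asc}(x,t)^{2}-(1-x-x^{2}(t-1))F^{\asc}(x,t)+1=0$ becomes
\[
(x^{2}+xw-x^{3})\,H^{2}-(1-x-w+x^{2})\,H+1=0.
\]
Expanding $H=\sum_{k\geq0}H_{k}(w)\,x^{k}$ and solving order by order in $x$, the constant term gives $-(1-w)H_{0}+1=0$, so $H_{0}=1/(1-w)$ (which correctly reflects that $(UD)^{n}$ is the unique length-$2n$ path with $n$ ascents), and the coefficient of $x^{1}$ gives $w\,H_{0}^{2}-(1-w)H_{1}+H_{0}=0$, whence $H_{1}=(w\,H_{0}^{2}+H_{0})/(1-w)=1/(1-w)^{3}$ after simplification. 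Therefore $c_{n}=[w^{n}]\,(1-w)^{-3}=\binom{n+2}{2}$, as claimed.

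The computations are entirely routine once the change of variables is chosen, so the only real content is recognizing that $t\mapsto w/x^{2}$ converts the awkward diagonal extraction into reading off a single coefficient of a power series satisfying a clean quadratic. The one step I would flag as the main potential pitfall is confirming that $H(x,w)$ is a genuine power series, so that ``$[x^{1}]$'' is meaningful and the order-by-order expansion is legitimate; this is precisely what the vanishing $f_{a,b}=0$ for $a<2b$ guarantees. Finally, I would note that the factor $(1-w)^{-3}$ has a transparent combinatorial reading: each such path consists of $n$ isolated up steps, $n$ down steps, and a single flat step, and $\binom{n+2}{2}$ counts the splittings of $n$ into three ordered nonnegative parts, which suggests a bijective proof as an alternative route.
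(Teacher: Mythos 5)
Your proof is correct, but it takes a genuinely different route from the paper. The paper argues combinatorially: it classifies a length-$(2n+1)$ path with $n$ ascents by the number $k$ of peaks $UD$ occurring at height $0$, observes that the remaining steps must form either a single flat step (when $k=n$) or the block $UF(UD)^{n-k-1}D$ starting at height $0$ (when $k<n$), that this leftover piece can be inserted in exactly $k+1$ positions, and then sums $\sum_{k=0}^{n}(k+1)={n+2 \choose 2}$. You instead extract the answer algebraically from Theorem \ref{t-ascents}: the substitution $t\mapsto w/x^{2}$ turns the diagonal coefficients $[x^{2n+1}t^{n}]\,F^{\asc}(x,t)$ into the coefficient of $x^{1}$ in a bona fide power series $H(x,w)$, and solving the transported quadratic order by order gives $H_{0}=1/(1-w)$ and $H_{1}=1/(1-w)^{3}$, whence the triangular numbers. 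I checked the algebra ($wH_{0}^{2}+H_{0}=1/(1-w)^{2}$, so $H_{1}=1/(1-w)^{3}$), and you correctly identified and discharged the one genuine subtlety, namely that $f_{a,b}=0$ for $a<2b$ makes $H$ a legitimate element of $K[[x,w]]$ so that the order-by-order solution is valid. The trade-off: your method is mechanical, reuses the already-proven functional equation, and scales---computing $H_{2}$ would immediately give the count of length-$(2n+2)$ paths with $n$ ascents, and the same device applies to the plateau and peak--valley generating functions later in the paper; the paper's argument is elementary, self-contained, and bijective, so it explains structurally \emph{why} triangular numbers appear (positions of one insertable block), which your closing remark about compositions into three nonnegative parts gestures at but does not carry out.
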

\begin{proof}
The maximum number of ascents that a Motzkin path of length $2n+1$
can have is $n$. Fix such a path $\mu$, and let $k$ be the number
of subwords $UD$ that occur at height 0 in $\mu$.

\begin{itemize}
\item If $k=n$, then the remaining step (which must be a flat step) can
be in $k+1$ possible positions: at the beginning, at the end, or
between two consecutive occurrences of $UD$.
\item If $k<n$, then it is easy to see that in order for $\mu$ to have
$n$ ascents, the remaining steps must form the subword $UF(UD)^{n-k-1}D$
beginning at height 0. Again, there are $k+1$ possible positions
for this subword: at the beginning, at the end, or between two consecutive
occurrences of $UD$.
\end{itemize}
Summing over all $k$, we conclude that the number of Motzkin paths
of length $2n+1$ with $n$ ascents is equal to 
\[
\sum_{k=0}^{n}(k+1)={n+2 \choose 2}.\tag*{\qedhere}
\]
\end{proof}
\noindent We can also use the generalized cluster method to count
Motzkin paths with ascents ending only at specified heights. Let $\mathbb{P}$
be the set of positive integers, $\mathbb{N}$ the set of non-negative
integers, $\mathbb{E}$ the set of positive even integers, $\mathbb{O}$
the set of positive odd integers, and $\mathbb{E}_{\geq0}$ the set
of non-negative even integers.
\begin{thm}
\label{t-ascrest} Let $A\subseteq\mathbb{P}$ and let 
\[
F^{\asc}(A;x)\coloneqq\sum_{n=0}^{\infty}c_{n}x^{n}
\]
where $c_{n}$ is the number of Motzkin paths of length $n$ with
every ascent ending at a height in $A$. Then, 
\[
F^{\asc}(A;x)=\frac{1}{1-x+C_{1}-}\:\frac{x^{2}-xC_{1}}{1-x+C_{2}-}\:\frac{x^{2}-xC_{2}}{1-x+C_{3}-}\cdots
\]
where 
\[
C_{i}=\begin{cases}
x^{2}, & \mbox{if }i\notin A\\
0, & \mbox{otherwise}.
\end{cases}
\]
\end{thm}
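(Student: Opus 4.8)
The plan is to mirror the proof of Theorem~\ref{t-ascents} but to exploit the positional refinement of the cluster matrix mentioned after Theorem~\ref{t-gjcmmn}: instead of marking \emph{every} occurrence of $UD$ and $UF$, I would mark only those occurrences ending at a height not in $A$ and then set the associated indeterminate to $0$, thereby forbidding exactly those occurrences. The combinatorial input is the same fact used in Theorem~\ref{t-ascents}: since a Motzkin path ends at height $0$, every ascent is immediately followed by a down step or a flat step, so an ascent ending at height $h$ is in bijection with an occurrence of $UD$ or $UF$ whose up step reaches height $h$. In the Motzkin monoid network of order $m$, the walk spelling $UD$ with top at height $h$ runs $h-1\to h\to h-1$ and so lands in the $(h-1,h-1)$ entry of a cluster matrix, while the walk spelling $UF$ runs $h-1\to h\to h$ and lands in the $(h-1,h)$ entry. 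Thus the \emph{position} of the occurrence in the network records precisely the ending height, which is what lets us mark occurrences selectively.

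Next I would note that $UD$ and $UF$ admit no nontrivial clusters: neither $D$ nor $F$ is a prefix of $UD$ or $UF$, so no proper suffix of either word is a proper prefix of either, and hence every cluster is a single marked subword. Working over the alphabet $P$ with $\overrightarrow{B}$ taken to be only the $UD$- and $UF$-walks ending at heights not in $A$, the restricted cluster matrix $\overrightarrow{L}^{\mathrm{rest}}_{G}(t)$ is therefore a signed sum of the matrices $M_\alpha$, carrying $UDt$ in entry $(i,i)$ and $UFt$ in entry $(i,i+1)$ exactly when $i+1\notin A$. Applying Theorem~\ref{t-gjcmmn} and setting $t=0$ gives $\bigl(I_{m+1}-\sum_{p\in P}M_p-\overrightarrow{L}^{\mathrm{rest}}_{G}(-1)\bigr)^{-1}$, whose $(1,1)$ entry counts Motzkin paths of height at most $m$ in which no $UD$ or $UF$ ends at a forbidden height, i.e.\ every ascent ends at a height in $A$.

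I would then apply the homomorphism $U,D,F\mapsto x$. Since $UD,UF\mapsto x^{2}$ and $F\mapsto x$, the matrix becomes tridiagonal with diagonal entries $1-x+C_{i+1}$, superdiagonal entries $-x+C_{i+1}$, and subdiagonal entries $-x$ (the last diagonal entry being the boundary value $1-x$, which is irrelevant in the limit). Concretely, this is exactly the matrix $M_m$ from the proof of Theorem~\ref{t-ascents} with the substitution replacing $-x^{2}(t-1)$ at level $i$ by $C_{i+1}$, equivalently taking $t=0$ where $i+1\notin A$ and $t=1$ where $i+1\in A$. Extracting the $(1,1)$ entry of the inverse by the same column-operation/continued-fraction expansion used before, the partial denominators are the diagonal entries $1-x+C_i$ and the partial numerators are the products $(-x+C_i)(-x)=x^{2}-xC_i$ of the corresponding super- and subdiagonal entries. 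Letting $m\to\infty$ yields the claimed infinite continued fraction.

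The step requiring the most care is justifying that one may mark and forbid $UD$/$UF$ occurrences at only the specified heights. This is legitimate because (a) distinct ending heights correspond to distinct walks in $\overrightarrow{P^{*}}$, so in Theorem~\ref{t-gjcm2} they may be assigned separate indeterminates, and (b) there are no overlapping clusters, so no cluster ever mixes an allowed occurrence with a forbidden one; consequently the clusters supported purely on allowed heights drop out (their weight is $(t-1)^{\mk}$ with $t=1$) while the forbidden ones survive with weight $-1$. Once this is in place the remainder is the routine tridiagonal determinant bookkeeping of Theorem~\ref{t-ascents}, the only change being that the up-step entries acquire the correction $C_i$, which turns the partial denominators $1-x-x^{2}(t-1)$ into $1-x+C_i$ and the partial numerators $x^{2}+x^{3}(t-1)$ into $x^{2}-xC_i$.
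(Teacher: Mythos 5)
Your proposal is correct and takes essentially the same approach as the paper: restrict the cluster matrix to the $UD$/$UF$ occurrences whose ascents end at heights outside $A$ (the paper phrases this as deleting the rows of the cluster matrix at permissible heights), set $t=0$, and rerun the tridiagonal continued-fraction computation of Theorem~\ref{t-ascents} before letting $m\to\infty$. Your write-up is in fact more detailed than the paper's proof, notably in justifying the positional marking via separate indeterminates for distinct walks in $\overrightarrow{P^{*}}$ under Theorem~\ref{t-gjcm2} and in observing that $UD$ and $UF$ admit no nontrivial clusters.
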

\begin{proof}
We weight both $UD$ and $UF$ by $t$, but we only wish to consider
instances of these subwords occuring at impermissible heights as we
will be setting $t=0$ afterward. The impermissible heights are $i-1$
where $i\notin A$, so that the corresponding ascents end at height
$i$. Thus, following the proof of Theorem \ref{t-ascents}, we take
the cluster matrix $\overrightarrow{L}_{\negmedspace G}(t)$ but delete
all entries in rows $i-1$ with $i\in A$. We obtain the result by
applying the cluster method, using matrix operations to obtain a continued
fraction formula, and then taking the limit as $m\rightarrow\infty$\textemdash all
in the same way as in the proof of Theorem \ref{t-ascents}\textemdash and
finally by setting $t=0$.
\end{proof}
For example, taking $A=\mathbb{E}$ and $A=\mathbb{O}$, we obtain
\begin{align*}
F^{\asc}(\mathbb{E};x) & =\frac{1}{1-x+x^{2}-}\:\frac{x^{2}-x^{3}}{1-x-}\:\frac{x^{2}}{1-x+x^{2}-}\:\frac{x^{2}-x^{3}}{1-x-}\:\frac{x^{2}}{1-x+x^{2}-}\cdots\\
 & =\frac{1-2x+2x^{2}-\sqrt{1-4x+4x^{2}-4x^{4}+4x^{5}}}{2(x^{2}-x^{3}+x^{4})}\\
 & =1+x+x^{2}+x^{3}+2x^{4}+5x^{5}+12x^{6}+27x^{7}+60x^{8}+135x^{9}+309x^{10}+\cdots
\end{align*}
and 
\begin{align*}
F^{\asc}(\mathbb{O};x) & =\frac{1}{1-x-}\:\frac{x^{2}}{1-x+x^{2}-}\:\frac{x^{2}-x^{3}}{1-x-}\:\frac{x^{2}}{1-x+x^{2}-}\:\frac{x^{2}-x^{3}}{1-x-}\cdots\\
 & =\frac{1-2x+2x^{2}-2x^{3}-\sqrt{1-4x+4x^{2}-4x^{4}+4x^{5}}}{2(x^{2}-2x^{3}+x^{4})}\\
 & =1+x+2x^{2}+4x^{3}+8x^{4}+16x^{5}+33x^{6}+70x^{7}+152x^{8}+336x^{9}+754x^{10}+\cdots
\end{align*}
as the generating functions for Motzkin paths with all ascents ending
at even heights and odd heights, respectively.\footnote{We note that the coefficients of $F^{\asc}(\mathbb{E};x)$ match OEIS
sequence \cite[A190171]{oeis} up to $x^{10}$ and the coefficients
of $F^{\asc}(\mathbb{O};x)$ match OEIS sequence \cite[A110334]{oeis}
up to $x^{12}$, but begin to deviate afterward.}

One can produce a refinement of Theorem \ref{t-ascrest} that also
keeps track of the number of ascents. Rather than deleting rows in
the cluster matrix, assign each $UD$ and $UF$ in those rows a weight
of $u$. After setting $t=0$, the remaining variables $x$ and $u$
would keep track of length and number of ascents, respectively.

It is also possible to count paths with restrictions on the heights
at which ascents begin, but the analysis is slightly more complicated.
Here we would want to set $B=\{DU,FU\}$, which suffices for Motzkin
paths that do not begin with an ascent. However, Motzkin paths that
begin with an ascent can be counted by considering walks in the monoid
network in Figure 5 from vertex $0^{\prime}$ to vertex $0$, and
we would multiply the result by $t$ at the end to take into account
the first ascent.\vspace{-15bp}
 \begin{center}
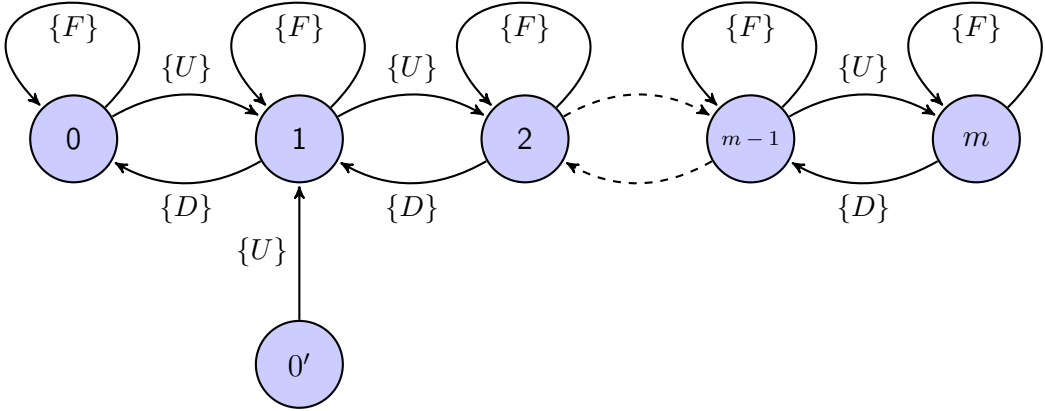

\begin{tikzpicture}[->,>=stealth',shorten >=1pt,auto,node distance=3cm,   thick,main node/.style={circle,fill=blue!20,draw,font=\sffamily, minimum size=2.8em}]

\node[main node] (1) {0};
\node[main node] (2) [right of =1] {1};
\node[main node] (3) [right of =2] {2};
\node[main node] (4) [right of =3] {{\scriptsize{$m-1$}}{\small \par}};
\node[main node] (5) [right of =4] {$m$};
\node[main node] (6) [below of =2] {$0^\prime$};

\path[every node/.style={font=\sffamily\small}]     
(1) edge [bend left] node {$\{U\}$} (2)    
edge [loop] node {$\{F\}$} (1)
(2) edge [bend left] node {$\{U\}$} (3)
edge [bend left] node {$\{D\}$} (1)
edge [loop] node {$\{F\}$} (2)
(3) edge [bend left] node {$\{D\}$} (2)
edge [dashed, bend left] node {} (4)
edge [loop] node {$\{F\}$} (3)
(4) edge [bend left] node {$\{U\}$} (5)
edge [dashed, bend left] node {} (3)
edge [loop] node {$\{F\}$} (4)
(5) edge [bend left] node {$\{D\}$} (4)
edge [loop] node {$\{F\}$} (5)
(6) edge [] node {$\{U\}$} (2);

\end{tikzpicture}
\captionof{figure}{Monoid network for counting Motzkin paths beginning with an ascent}
\end{center}

\subsection{Counting Motzkin paths by plateaus}

We now count Motzkin paths by occurrences of $UF^{k}D$, which we
call a $k$-\textit{plateau}.\footnote{These are sometimes also called $k$-\textit{humps} in the literature.}
For a fixed $k$, let $\plt_{k}(\mu)$ be the number of $k$-plateaus
of a Motzkin path $\mu$, and let 
\[
F_{m}^{\plt_{k}}(x,t)\coloneqq\sum_{n=0}^{\infty}\sum_{\mu\in\mathcal{M}_{n}^{m}}t^{\plt_{k}(\mu)}x^{n}\qquad\mbox{and}\qquad F^{\plt_{k}}(x,t)\coloneqq\sum_{n=0}^{\infty}\sum_{\mu\in\mathcal{M}_{n}}t^{\plt_{k}(\mu)}x^{n}.
\]
Then we have the following formulas.
\begin{thm}
\label{t-kplateau} Let $\{P_{m}^{\plt_{k}}(x,t)\}_{m\geq0}$ be the
sequence of polynomials defined by $P_{0}^{\plt_{k}}(x,t)=1$, $P_{1}^{\plt_{k}}(x,t)=1-x$,
and 
\[
P_{m}^{\plt_{k}}(x,t)=(1-x-x^{k+2}(t-1))P_{m-1}^{\plt_{k}}(x,t)-x^{2}P_{m-2}^{\plt_{k}}(x,t)
\]
for $m\geq2$. Then 
\begin{align*}
F_{m}^{\plt_{k}}(x,t) & =\frac{P_{m}^{\plt_{k}}(x,t)}{P_{m+1}^{\plt_{k}}(x,t)}\\
 & =\underset{\mathrm{depth}\mbox{ }m+1}{\underbrace{\frac{1}{1-x-x^{k+2}(t-1)-}\:\frac{x^{2}}{1-x-x^{k+2}(t-1)-}\cdots\frac{x^{2}}{1-x-x^{k+2}(t-1)-}\:\frac{x^{2}}{1-x}}}
\end{align*}
for $m\geq1$ and 
\begin{align*}
F^{\plt_{k}}(x,t) & =\frac{1}{1-x-x^{k+2}(t-1)-}\:\frac{x^{2}}{1-x-x^{k+2}(t-1)-}\:\frac{x^{2}}{1-x-x^{k+2}(t-1)-}\cdots\\
 & =\frac{1-x-x^{k+2}(t-1)-\sqrt{(1-x-x^{k+2}(t-1))^{2}-4x^{2}}}{2x^{2}}.
\end{align*}
\end{thm}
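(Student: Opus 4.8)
The plan is to run the proof of Theorem~\ref{t-ascents} almost verbatim, replacing the ascent subwords by the single $k$-plateau subword. First I would apply the cluster method (Theorem~\ref{t-gjcmmn}) to the Motzkin monoid network of order $m$ with $B=\{UF^{k}D\}$, weighting $UF^{k}D$ by $t$; this is the correct choice because the number of occurrences of $UF^{k}D$ in the underlying word of a Motzkin path is exactly its number of $k$-plateaus. The key structural observation is that $UF^{k}D$ admits no nontrivial self-overlap: every proper prefix begins with $U$ while every proper suffix ends with $D$, so no proper suffix can equal a proper prefix. Consequently the only clusters formed by $UF^{k}D$ are single occurrences of $UF^{k}D$ itself.

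Next I would compute the cluster matrix. A copy of $UF^{k}D$ beginning at height $i$ is traversed by the walk $i\to i+1\to\cdots\to i+1\to i$, so it contributes to the $(i,i)$ entry; hence $\overrightarrow{L}_{G}(t)$ is diagonal, with $UF^{k}D\cdot t$ in entries $(i,i)$ for $0\leq i\leq m-1$ and $0$ in the $(m,m)$ entry, since a plateau cannot begin at the maximal height $m$. By Theorem~\ref{t-gjcmmn}, $F_{m}^{\plt_{k}}(x,t)$ is the $(1,1)$ entry of the inverse of $A_{m}-\overrightarrow{L}_{G}(t-1)$; applying the homomorphism $U,D,F\mapsto x$ turns this into an $(m+1)\times(m+1)$ tridiagonal matrix $M_{m}$ with diagonal entries $1-x-x^{k+2}(t-1)$ in the first $m$ rows, final diagonal entry $1-x$, and all off-diagonal entries equal to $-x$. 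Because only the diagonal is perturbed by the cluster term, the off-diagonal products contribute the clean factor $(-x)(-x)=x^{2}$, which accounts for the $-x^{2}$ term in the claimed recursion, in contrast to the ascent case where the $UF$ superdiagonal was itself perturbed.

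From here the argument is purely linear-algebraic and identical in spirit to the ascent proof. By Cramer's rule $F_{m}^{\plt_{k}}(x,t)=\det M_{m-1}/\det M_{m}$; expanding the tridiagonal determinant along the first row (equivalently, triangularizing by determinant-preserving column operations from the bottom) yields the three-term recurrence $\det M_{m}=(1-x-x^{k+2}(t-1))\det M_{m-1}-x^{2}\det M_{m-2}$ with base cases $\det M_{-1}=1$ and $\det M_{0}=1-x$, so that $P_{m}^{\plt_{k}}=\det M_{m-1}$ satisfies the stated recursion and the finite continued fraction follows by reading off the triangularized diagonal entries (note the special bottom entry $1-x$ appears as the innermost denominator of depth $m+1$). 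Finally, taking the formal power series limit as $m\to\infty$ gives the infinite continued fraction, which yields the functional equation $x^{2}F^{\plt_{k}}(x,t)^{2}-(1-x-x^{k+2}(t-1))F^{\plt_{k}}(x,t)+1=0$; solving this quadratic and selecting the branch analytic at $x=0$ (the subtraction solution, so that $F^{\plt_{k}}(0,t)=1$) produces the closed form.

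The main obstacle here is conceptual rather than computational: it is the verification that $UF^{k}D$ has no nontrivial self-overlaps, since this is exactly what forces the cluster matrix to be diagonal and yields the simple $x^{2}$ off-diagonal term. Once that observation is in place, the remainder reduces to the same tridiagonal determinant and continued fraction computation already carried out for ascents, and no new ideas are needed.
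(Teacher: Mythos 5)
Your proposal is correct and follows essentially the same route as the paper's proof: the same singleton set $B=\{UF^{k}D\}$, the same diagonal cluster matrix with vanishing $(m,m)$ entry, the same tridiagonal matrix $M_{m}$, Cramer's rule giving $\det M_{m-1}/\det M_{m}$, the same three-term determinant recursion and continued fraction, and the same limit-plus-quadratic argument with the subtraction branch. The only difference is that you explicitly justify that $UF^{k}D$ admits no nontrivial self-overlaps (so the only cluster is a single occurrence), a fact the paper simply asserts.
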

The two formulas for $F^{\plt_{k}}(x,t)$ were found earlier by Drake
and Gantner \cite[Proposition 3.4 and Theorem 4.2]{Drake2012} using
a different method; here we give a proof using our generalization
of the cluster method.
\begin{proof}
Set $B=\{UF^{k}D\}$, and once again consider the Motzkin path monoid
network. The only cluster formed by $UF^{k}D$ is itself, and so the
$(m+1)\times(m+1)$ cluster matrix is 
\[
\overrightarrow{L}_{\negmedspace G}(t)=\begin{bmatrix}UF^{k}Dt\\
 & UF^{k}Dt\\
 &  & UF^{k}Dt\\
 &  &  & \ddots\\
 &  &  &  & UF^{k}Dt\\
 &  &  &  &  & 0
\end{bmatrix}.
\]
By Theorem \ref{t-gjcmmn}, we have $\overrightarrow{F}_{\negthinspace\negthinspace G}(t)=(A_{m}-\overrightarrow{L}_{\negmedspace G}(t))^{-1}$
(where $A_{m}$ is defined in the proof of Theorem \ref{t-ascents}),
and so $F_{m}^{\plt_{k}}(x,t)$ is the $(1,1)$ entry of $M_{m}^{-1}$
where $M_{m}$ is the matrix {\footnotesize{}
\[
M_{m}=\begin{bmatrix}1-x-x^{k+2}(t-1) & -x\\
-x & 1-x-x^{k+2}(t-1) & -x\\
 & -x & 1-x-x^{k+2}(t-1) & \ddots\\
 &  & \ddots & \ddots & \ddots\\
 &  &  & \ddots & 1-x-x^{k+2}(t-1) & -x\\
 &  &  &  & -x & 1-x
\end{bmatrix}
\]
}obtained by applying to $A_{m}-\overrightarrow{L}_{\negmedspace G}(t)$
the homomorphism sending each of $U$, $F$, and $D$ to $x$.

It follows that

\[
F_{m}^{\plt_{k}}(x,t)=\frac{\det M_{m-1}}{\det M_{m}},
\]
and the determinant of $M_{m}$ is equal to that of an upper-triangular
matrix with diagonal entries
\[
u_{i,i}=\begin{cases}
1-x-x^{k+2}(t-1)-{\displaystyle \frac{x^{2}}{u_{i+1,i+1}}}, & \mbox{if }1\leq i\leq m\\
1-x. & \mbox{if }i=m+1.
\end{cases}
\]
Thus we have the recursion {\allowdisplaybreaks} 
\begin{align*}
\det M_{m}=\prod_{i=1}^{m+1}u_{i,i} & =\left(1-x-x^{k+2}(t-1)-\frac{x^{2}}{\left(\frac{\det M_{m-1}}{\det M_{m-2}}\right)}\right)\det M_{m-1}\\
 & =(1-x-x^{k+2}(t-1))\det M_{m-1}-x^{2}\det M_{m-2}
\end{align*}
with initial conditions $\det M_{-1}=1$ and $\det M_{0}=1-x$. These
are polynomials in $x$ and $t$, and we write $P_{m}^{\plt_{k}}(x,t)=\det M_{m-1}$.
Moreover, 
\[
\frac{\det M_{m}}{\det M_{m-1}}=\underset{\mathrm{depth}\mbox{ }m}{\underbrace{1-x-x^{k+2}(t-1)-\frac{x^{2}}{1-x-x^{k+2}(t-1)-}\cdots\frac{x^{2}}{1-x-x^{k+2}(t-1)-}\:\frac{x^{2}}{1-x}}},
\]
so 
\[
F_{m}^{\plt_{k}}(x,t)=\underset{\mathrm{depth}\mbox{ }m+1}{\underbrace{\frac{1}{1-x-x^{k+2}(t-1)-}\:\frac{x^{2}}{1-x-x^{k+2}(t-1)-}\cdots\frac{x^{2}}{1-x-x^{k+2}(t-1)-}\:\frac{x^{2}}{1-x}}}.
\]
Taking the limit as $m\rightarrow\infty$, we obtain 
\begin{align*}
F^{\plt_{k}}(x,t) & =\frac{1}{1-x-x^{k+2}(t-1)-}\:\frac{x^{2}}{1-x-x^{k+2}(t-1)-}\:\frac{x^{2}}{1-x-x^{k+2}(t-1)-}\cdots\\
 & =\frac{1}{1-x-x^{k+2}(t-1)-x^{2}F^{\plt_{k}}(x,t)}
\end{align*}
which can be rewritten as 
\begin{equation}
x^{2}F^{\plt_{k}}(x,t)^{2}-(1-x-x^{k+2}(t-1))F^{\plt_{k}}(x,t)+1=0.\label{e-pltrec}
\end{equation}
Solving (\ref{e-pltrec}) gives 
\[
F^{\plt_{k}}(x,t)=\frac{1-x-x^{k+2}(t-1)\pm\sqrt{(1-x-x^{k+2}(t-1))^{2}-4x^{2}}}{2x^{2}},
\]
but one can check that the solution given by the minus sign is the
correct one.
\end{proof}
By specializing to $k=0$ and defining $\pk=\plt_{0}$, we obtain
the bivariate generating function 
\[
F^{\pk}(x,t)=\frac{1-x-x^{2}(t-1)-\sqrt{(1-x-x^{2}(t-1))^{2}-4x^{2}}}{2x^{2}}
\]
counting Motzkin paths by \textit{peaks}, which are occurrences of
$UD$. The first several terms of $F^{\pk}(x,t)$ are in the following
table:

\begin{center}
\begin{tabular}{c|c|c|c}
$n$ & $[x^{n}]\,F^{\pk}(x,t)$ & $n$ & $[x^{n}]\,F^{\pk}(x,t)$\tabularnewline
\hline 
0 & $1$ & 5 & $8+10t+3t^{2}$\tabularnewline
1 & $1$ & 6 & $17+24t+9t^{2}+t^{3}$\tabularnewline
2 & $1+t$ & 7 & $37+58t+28t^{2}+4t^{3}$\tabularnewline
3 & $2+2t$ & 8 & $82+143t+81t^{2}+16t^{3}+t^{4}$\tabularnewline
4 & $4+4+t^{2}$ & 9 & $185+354t+231t^{2}+60t^{3}+5t^{4}$\tabularnewline
\end{tabular}
\par\end{center}

\noindent See \cite[A097860]{oeis} for its OEIS entry. Also see \cite[A004148]{oeis}
for the constant coefficients of these polynomials, which count Motzkin
paths with no peaks. The generating function for the linear coefficients
of these polynomials can be verified to be 
\[
\left[\frac{\partial}{\partial t}F^{\pk}(x,t)\right]_{t=0}=\frac{1-x+x^{2}-\sqrt{1-2x-x^{2}-2x^{3}+x^{4}}}{2\sqrt{1-2x-x^{2}-2x^{3}+x^{4}}},
\]
and interestingly enough, dividing this generating function by $x$
(i.e., shifting the indices of the underlying sequence) yields the
generating function for the number of flat steps in all peakless Motzkin
paths of length $n$ (see \cite[A110236]{oeis}). These numbers are
given by a binomial coefficient sum, which in turn gives us the following
corollary.
\begin{cor}
The number of Motzkin paths of length $n\geq2$ with exactly one peak
is equal to $\sum_{k=0}^{n-2}{k+1 \choose n-k+1}{k \choose n-k}$.
\end{cor}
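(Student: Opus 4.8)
The plan is to mirror the proof of the preceding corollary: express the count as a coefficient of $F^{\peak}(x,t)$, reduce it to the univariate series recorded above, and identify that series with a sequence whose closed form is a binomial sum. Since the number of length-$n$ Motzkin paths with exactly one peak is the coefficient of $x^n t$ in $F^{\peak}(x,t)$, and the coefficient of $t^1$ in a power series in $t$ is its derivative evaluated at $t=0$, this number equals $[x^n]\left[\frac{\partial}{\partial t}F^{\peak}(x,t)\right]_{t=0}$.

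First I would confirm the closed form $\left[\frac{\partial}{\partial t}F^{\peak}(x,t)\right]_{t=0}=\frac{1-x+x^2-\sqrt{1-2x-x^2-2x^3+x^4}}{2\sqrt{1-2x-x^2-2x^3+x^4}}$ by differentiating the explicit algebraic formula for $F^{\peak}$ and setting $t=0$; this is routine and is simplified by the factorization $1-2x-x^2-2x^3+x^4=(1-3x+x^2)(1+x+x^2)$ of the radicand.

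Next I would invoke the relation already noted above, namely that dividing this generating function by $x$ yields the generating function for the number $a_m$ of flat steps in all peakless Motzkin paths of length $m$ (\cite[A110236]{oeis}); equivalently, $[x^n]\left[\frac{\partial}{\partial t}F^{\peak}\right]_{t=0}=a_{n-1}$. The conceptual reason for this index shift is a bijection: deleting the unique peak $UD$ of a one-peak path of length $n$ and inserting a single flat step in its place produces a peakless path of length $n-1$ carrying a distinguished flat step, the image of the peak. Since $F$ cannot be part of a $UD$ factor and the original path had only this one peak, no new peak is created, and the map is reversible; hence one-peak paths of length $n$ are in bijection with choices of a flat step in a peakless path of length $n-1$, and summing over all such paths gives $a_{n-1}$.

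It then remains to substitute the known binomial-sum formula for A110236 and perform the shift $m=n-1$, re-indexing the summation to obtain the stated expression. The main obstacle is not conceptual but the index bookkeeping of this final step: one must align the lower arguments and the summation range of the A110236 formula with the claimed summand after the shift, precisely the place where an off-by-one can intrude, so I would cross-check the result against the small values $b_2=1$, $b_3=2$, $b_4=4$, $b_5=10$ read off from the table above. If one prefers a self-contained argument avoiding the OEIS, the same identity follows by verifying that the double sum $\sum_{k,j\geq0}\binom{k+1}{j+1}\binom{k}{j}x^{k+j}$ agrees, up to the relevant power of $x$, with the algebraic series above, for instance by checking that both satisfy the same quadratic relation.
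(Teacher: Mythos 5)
Your proposal follows the paper's route exactly: the paper's entire justification for this corollary is the paragraph preceding it (differentiate $F^{\peak}(x,t)$ at $t=0$, recognize the result as $x$ times the generating function of OEIS entry A110236 for flat steps in peakless Motzkin paths, and quote that entry's binomial-coefficient sum). Your one genuine addition is the bijection: replacing the unique peak $UD$ by a flat step gives a peakless Motzkin path of length $n-1$ with a distinguished flat step, and this is indeed a correct, invertible map, since swapping a marked $F$ with $UD$ (or back) can neither create nor destroy any other occurrence of $UD$ and preserves nonnegativity of heights. The paper merely observes the generating-function coincidence ``interestingly enough''; your bijection explains it combinatorially, which is a real improvement.

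However, the one step you defer---``align the lower arguments and the summation range''---is precisely where the difficulty sits, and your own proposed cross-check exposes a problem: the sum as printed in the corollary fails it. For $n=2$ the printed formula gives $\binom{1}{3}\binom{0}{2}=0$, and for $n=3,4,5$ it gives $0,1,12$, whereas the table forces $1,2,4,10$. Carrying out your plan correctly---applying the shift $m=n-1$ to the A110236 formula, or equivalently extracting the coefficient of $x^{n-2}$ from your double sum $\sum_{k,j\ge 0}\binom{k+1}{j+1}\binom{k}{j}x^{k+j}$---yields
\[
\sum_{k=0}^{n-2}\binom{k+1}{n-k-1}\binom{k}{n-k-2},
\]
which does produce $1,2,4,10,24,58$ for $n=2,\dots,7$. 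So your method is sound and matches the paper's, but what it proves is this corrected sum: the binomial sum displayed in the corollary is itself off by an index shift (it is the A110236 formula evaluated at $n+1$ rather than $n-1$), and your small-value sanity check is exactly the device that catches the error. You should finish the proof by stating and proving the corrected identity rather than the printed one.
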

Now let us consider 1-plateaus, or occurrences of $UFD$. The bivariate
generating function 
\[
F^{\plt_{1}}(x,t)=\frac{1-x-x^{3}(t-1)-\sqrt{(1-x-x^{3}(t-1))^{2}-4x^{2}}}{2x^{2}}
\]
counts Motzkin paths by 1-plateaus, and its first several terms are:

\begin{center}
\begin{tabular}{c|c|c|c}
$n$ & $[x^{n}]\,F^{\plt_{1}}(x,t)$ & $n$ & $[x^{n}]\,F^{\plt_{1}}(x,t)$\tabularnewline
\hline 
0 & $1$ & 5 & $15+6t$\tabularnewline
1 & $1$ & 6 & $36+14t+t^{2}$\tabularnewline
2 & $2$ & 7 & $85+39t+3t^{2}$\tabularnewline
3 & $3+t$ & 8 & $209+102t+12t^{2}$\tabularnewline
4 & $7+2t$ & 9 & $517+280t+37t^{2}+t^{3}$\tabularnewline
\end{tabular}
\par\end{center}

\noindent These are also in the OEIS \cite[A114583]{oeis}, along
with the constant coefficients \cite[A114584]{oeis}, which count
Motzkin paths with no occurrences of $UFD$.

We can also count Motzkin paths by all plateaus, without a fixed $k$.
Let $\plt(\mu)$ be the number of plateaus in a Motzkin path $\mu$,
that is, the number of occurrences of subwords in $B=\{UD,UFD,UFFD,\dots\}$.
We define the bivariate generating functions $F_{m}^{\plt}(x,t)$
and $F^{\plt}(x,t)$ in the analogous way as before, and to find expressions
for these generating functions, we would change each nonzero entry
in the cluster matrix from $UF^{k}Dt$ (for a fixed $k$) to 
\[
\sum_{k=0}^{\infty}UF^{k}Dt=U(1-F)^{-1}Dt.
\]
Then the computation would follow in the same way, yielding the following
result.
\begin{thm}
\label{t-plateau} Let $\{R_{m}^{\plt}(x,t)\}_{m\geq0}$ be the sequence
of rational functions defined by $R_{0}^{\plt}(x,t)=1$, $R_{1}^{\plt}(x,t)=1-x$,
and
\[
R_{m}^{\plt}(x,t)=\left(1-x-\frac{x^{2}}{1-x}(t-1)\right)R_{m-1}^{\plt}(x,t)-x^{2}R_{m-2}^{\plt}(x,t)
\]
for $m\geq2$. Then 
\begin{align*}
F_{m}^{\plt}(x,t) & =\frac{R_{m}^{\plt}(x,t)}{R_{m+1}^{\plt}(x,t)}\\
 & =\underset{\mathrm{depth}\mbox{ }m+1}{\underbrace{\frac{1}{1-x-\frac{x^{2}}{1-x}(t-1)-}\:\frac{x^{2}}{1-x-\frac{x^{2}}{1-x}(t-1)-}\cdots\frac{x^{2}}{1-x-\frac{x^{2}}{1-x}(t-1)-}\:\frac{x^{2}}{1-x}}}
\end{align*}
for $m\geq1$ and 
\begin{align*}
F^{\plt}(x,t) & =\frac{1}{1-x-\frac{x^{2}}{1-x}(t-1)-}\:\frac{x^{2}}{1-x-\frac{x^{2}}{1-x}(t-1)-}\:\frac{x^{2}}{1-x-\frac{x^{2}}{1-x}(t-1)-}\cdots\\
 & =\frac{1-2x-x^{2}(t-2)-\sqrt{1-4x-2x^{2}(t-2)+4x^{3}t+x^{4}t(t-4)}}{2(x^{2}-x^{3})}.
\end{align*}
\end{thm}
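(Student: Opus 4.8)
The plan is to mirror the proofs of Theorems \ref{t-ascents} and \ref{t-kplateau}, applying the generalized cluster method (Theorem \ref{t-gjcmmn}) to the Motzkin monoid network of order $m$ with the infinite forbidden set $B=\{UD,UFD,UFFD,\dots\}$ and weighting every plateau by the single indeterminate $t$. The one genuinely new point at the combinatorial level is to confirm that the only clusters formed by $B$ are the individual plateaus $UF^{k}D$, each contributing a single marked subword. First I would note that every proper suffix of a word $UF^{k}D$ is one of $D,FD,\dots,F^{k}D$ and hence begins with $F$ or $D$, whereas every prefix of a word $UF^{l}D$ begins with $U$; no proper suffix of one plateau can therefore coincide with a prefix of another, so two marked subwords can never overlap. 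Consequently the cluster matrix $\overrightarrow{L}_{\negmedspace G}(t)$ is diagonal, with $(h,h)$ entry equal to $\sum_{k=0}^{\infty}UF^{k}D\,t=U(1-F)^{-1}D\,t$ for $0\le h\le m-1$ and entry $0$ in the final row, exactly as indicated in the remarks preceding the theorem.

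From here the argument is the same bookkeeping as before. By Theorem \ref{t-gjcmmn} we have $\overrightarrow{F}_{\!\! G}(t)=(A_{m}-\overrightarrow{L}_{\negmedspace G}(t-1))^{-1}$, with $A_{m}$ the tridiagonal matrix from the proof of Theorem \ref{t-ascents}. Applying the homomorphism sending $U,F,D\mapsto x$ sends $U(1-F)^{-1}D$ to $x^{2}/(1-x)$, so $F_{m}^{\plt}(x,t)$ becomes the $(1,1)$ entry of $M_{m}^{-1}$, where $M_{m}$ is the $(m+1)\times(m+1)$ tridiagonal matrix with off-diagonal entries $-x$, first $m$ diagonal entries $1-x-\frac{x^{2}}{1-x}(t-1)$, and last diagonal entry $1-x$. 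Cramer's rule gives $F_{m}^{\plt}(x,t)=\det M_{m-1}/\det M_{m}$, and triangularizing $M_{m}$ by column operations yields continued-fraction diagonal entries and hence the three-term recursion
\[
\det M_{m}=\Big(1-x-\frac{x^{2}}{1-x}(t-1)\Big)\det M_{m-1}-x^{2}\det M_{m-2},
\]
with $\det M_{-1}=1$ and $\det M_{0}=1-x$. Setting $R_{m}^{\plt}=\det M_{m-1}$ gives both the claimed recursion and the depth-$(m+1)$ continued fraction for $F_{m}^{\plt}(x,t)$.

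For the unbounded case I would take the formal power series limit $m\to\infty$ of the finite continued fractions, obtaining the infinite continued fraction displayed in the theorem; its self-similar structure gives the fixed-point equation
\[
F^{\plt}(x,t)=\frac{1}{1-x-\frac{x^{2}}{1-x}(t-1)-x^{2}F^{\plt}(x,t)},
\]
equivalently $x^{2}F^{\plt}(x,t)^{2}-\big(1-x-\frac{x^{2}}{1-x}(t-1)\big)F^{\plt}(x,t)+1=0$. This is where a little more care than in Theorem \ref{t-kplateau} is needed: to reach the stated closed form I would clear the denominator by multiplying through by $1-x$, turning the quadratic into $x^{2}(1-x)F^{\plt}(x,t)^{2}-(1-2x-x^{2}(t-2))F^{\plt}(x,t)+(1-x)=0$, whose leading coefficient $x^{2}-x^{3}$ matches the denominator in the theorem. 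Solving, simplifying the discriminant to $1-4x-2x^{2}(t-2)+4x^{3}t+x^{4}t(t-4)$, and selecting the minus branch (the only one that is a power series with constant term $1$) then produces the claimed formula.

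The bulk of this is routine once the cluster structure is pinned down, so I expect the main obstacle to lie precisely in the two places where the infinite family $B$ departs from the single-word case: justifying that distinct-length plateaus never overlap, so that the diagonal cluster entry is genuinely the geometric series $U(1-F)^{-1}D$, and then correctly clearing the resulting $1/(1-x)$ factor so that the final square-root expression comes out with denominator $2(x^{2}-x^{3})$ rather than $2x^{2}$.
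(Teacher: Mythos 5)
Your proposal is correct and follows essentially the same route as the paper, which proves Theorem \ref{t-plateau} by simply replacing each nonzero cluster-matrix entry $UF^{k}Dt$ from the proof of Theorem \ref{t-kplateau} with $\sum_{k=0}^{\infty}UF^{k}Dt=U(1-F)^{-1}Dt$ and repeating the same determinant and continued-fraction computation. The two details you flag as the real content---verifying that plateaus of different lengths cannot overlap (so the clusters are exactly the words of $B$ themselves, a fact the paper only asserts in a later remark) and clearing the $1/(1-x)$ factor to get the denominator $2(x^{2}-x^{3})$---are exactly the steps the paper leaves implicit, and your treatment of both is sound.
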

The first several terms of $F^{\plt}(x,t)$ are below, which can also
be found on the OEIS \cite[A097229]{oeis}:

\begin{center}
\begin{tabular}{c|c|c|c}
$n$ & $[x^{n}]\,F^{\plt}(x,t)$ & $n$ & $[x^{n}]\,F^{\plt}(x,t)$\tabularnewline
\hline 
0 & $1$ & 5 & $1+15t+5t^{2}$\tabularnewline
1 & $1$ & 6 & $1+31t+18t^{2}+t^{3}$\tabularnewline
2 & $1+t$ & 7 & $1+63t+56t^{2}+7t^{3}$\tabularnewline
3 & $1+3t$ & 8 & $1+127t+160t^{2}+34t^{3}+t^{4}$\tabularnewline
4 & $1+7t+t^{2}$ & 9 & $1+255t+432t^{2}+138t^{3}+9t^{4}$\tabularnewline
\end{tabular}
\par\end{center}

\noindent We now give expressions for the linear and quadratic coefficients
of these polynomials.
\begin{cor}
\noindent The number of Motzkin paths of length $n\geq1$ with exactly
one plateau is equal to $2^{n-1}-1$.
\end{cor}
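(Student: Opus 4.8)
The plan is to extract the linear-in-$t$ coefficient of $F^{\plt}(x,t)$ exactly as in the proof of the corollary on Motzkin paths with one ascent. Since the number of plateaus is recorded by the exponent of $t$, the number of Motzkin paths of length $n$ with exactly one plateau is $[x^n t^1]F^{\plt}(x,t)$, and extracting the $t^1$ term amounts to computing
\[
\left[\frac{\partial}{\partial t}F^{\plt}(x,t)\right]_{t=0}.
\]
I would then recognize the resulting univariate generating function as a simple rational function whose coefficients are manifestly $2^{n-1}-1$.

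Concretely, I would differentiate the closed form
\[
F^{\plt}(x,t)=\frac{1-2x-x^{2}(t-2)-\sqrt{1-4x-2x^{2}(t-2)+4x^{3}t+x^{4}t(t-4)}}{2(x^{2}-x^{3})}
\]
with respect to $t$. Writing the radicand as $R(x,t)$, the derivative is $(-x^{2}-\tfrac{1}{2}R^{-1/2}\,\partial_t R)/(2(x^{2}-x^{3}))$. The key simplification is that at $t=0$ the radicand collapses to a perfect square: $R(x,0)=1-4x+4x^{2}=(1-2x)^{2}$, so $\sqrt{R}$ becomes the polynomial $1-2x$ (the correct branch for formal power series). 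Substituting $\partial_t R|_{t=0}=-2x^{2}+4x^{3}-4x^{4}$ and clearing the denominator $1-2x$, the numerator telescopes to $2x^{4}/(1-2x)$, and dividing by $2(x^{2}-x^{3})=2x^{2}(1-x)$ yields
\[
\left[\frac{\partial}{\partial t}F^{\plt}(x,t)\right]_{t=0}=\frac{x^{2}}{(1-2x)(1-x)}.
\]

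Finally I would extract coefficients by the partial fraction decomposition $\frac{x^{2}}{(1-2x)(1-x)}=\frac{x}{1-2x}-\frac{x}{1-x}$, so that $[x^{n}]$ equals $2^{n-1}-1$ for $n\ge1$, which is exactly the claimed count. The only delicate point is the algebraic simplification of the square-root term after differentiation; once the observation that $R(x,0)$ is a perfect square is in hand, the rest is routine (and can be confirmed in Maple, as in the analogous ascent corollary). I do not expect any genuine obstacle here, only the bookkeeping in collapsing the numerator to $2x^{4}/(1-2x)$.
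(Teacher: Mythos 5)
Your proposal is correct and takes essentially the same route as the paper: both extract $[x^{n}t]\,F^{\plt}(x,t)$ by computing $\left[\frac{\partial}{\partial t}F^{\plt}(x,t)\right]_{t=0}=\frac{x^{2}}{(1-2x)(1-x)}$ and then reading off coefficients via a partial fraction decomposition. The only difference is that you carry out the differentiation by hand (exploiting the collapse of the radicand to $(1-2x)^{2}$ at $t=0$), whereas the paper simply cites a Maple verification of the same identity.
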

\begin{proof}
Using Maple, one may verify that 
\[
\left[\frac{\partial}{\partial t}F^{\plt}(x,t)\right]_{t=0}=\frac{x^{2}}{(1-2x)(1-x)}.
\]
{\allowdisplaybreaks} Then, 
\begin{align*}
[x^{n}t]\,F^{\plt}(x,t) & =[x^{n}]\,\left[\frac{\partial}{\partial t}F^{\plt}(x,t)\right]_{t=0}\\
 & =[x^{n-2}]\,\frac{1}{(1-2x)(1-x)}\\
 & =[x^{n-2}]\,\left(\frac{2}{1-2x}-\frac{1}{1-x}\right)\\
 & =[x^{n-2}]\Big(\sum_{n=0}^{\infty}(2^{n+1}-1)x^{n}\Big)\\
 & =2^{n-1}-1.\tag*{\qedhere}
\end{align*}
\end{proof}
\begin{cor}
\noindent The number of Motzkin paths of length $n\geq3$ with exactly
two plateaus is equal to $(n-3)n2^{n-6}$.
\end{cor}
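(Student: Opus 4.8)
The plan is to mirror the method used in the two preceding corollaries, but to extract the coefficient of $t^2$ from the bivariate generating function $F^{\plt}(x,t)$ rather than the coefficient of $t^1$. Since $F^{\plt}(x,t)=\sum_{n}\sum_{\mu\in\mathcal{M}_n}t^{\plt(\mu)}x^n$, the number of Motzkin paths of length $n$ with exactly two plateaus is
\[
[x^n t^2]\,F^{\plt}(x,t)=[x^n]\,\frac{1}{2}\left[\frac{\partial^2}{\partial t^2}F^{\plt}(x,t)\right]_{t=0}.
\]
So the first step is to compute this second partial derivative at $t=0$ in closed form, and the second step is to extract the coefficient of $x^n$ and simplify it to $(n-3)n\,2^{n-6}$.

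For the first step I would differentiate the closed form from Theorem \ref{t-plateau} twice with respect to $t$ (using Maple, as in the earlier corollaries). The key simplification is that the radicand $1-4x-2x^2(t-2)+4x^3t+x^4t(t-4)$ collapses to the perfect square $(1-2x)^2$ at $t=0$, so the square root and its derivatives evaluate cleanly there. Carrying out the differentiation, I expect to obtain
\[
\frac{1}{2}\left[\frac{\partial^2}{\partial t^2}F^{\plt}(x,t)\right]_{t=0}=\frac{x^4(1-x)}{(1-2x)^3}.
\]

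For the second step I would expand this rational function using the standard identity $[x^m](1-2x)^{-3}=\binom{m+2}{2}2^m$. Writing $\frac{x^4(1-x)}{(1-2x)^3}=\frac{x^4}{(1-2x)^3}-\frac{x^5}{(1-2x)^3}$ gives
\[
[x^n]\,\frac{x^4(1-x)}{(1-2x)^3}=\binom{n-2}{2}2^{n-4}-\binom{n-3}{2}2^{n-5}=2^{n-5}\left(2\binom{n-2}{2}-\binom{n-3}{2}\right),
\]
and a short factorization of the bracketed expression yields $(n-3)n/2$, so the coefficient equals $(n-3)n\,2^{n-6}$, as claimed.

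The computations are entirely routine once the closed form is in hand; the only mild obstacle is the bookkeeping in differentiating the square root twice, but the collapse of the radicand to $(1-2x)^2$ at $t=0$ keeps this manageable and matches the approach already used for the linear coefficient. I would also sanity-check the result against the table of coefficients of $F^{\plt}(x,t)$ (e.g.\ the values $1,5,18,56$ for $n=4,5,6,7$).
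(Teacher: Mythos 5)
Your proposal is correct and takes essentially the same approach as the paper: both compute $\tfrac{1}{2}\bigl[\tfrac{\partial^{2}}{\partial t^{2}}F^{\plt}(x,t)\bigr]_{t=0}=\tfrac{x^{4}(1-x)}{(1-2x)^{3}}$ via Maple and then extract the coefficient of $x^{n}$. The only minor difference is the last step, where the paper cites the OEIS generating function for $n(n+3)2^{n-3}$ (A001793) while you expand directly via $[x^{m}]\,(1-2x)^{-3}=\binom{m+2}{2}2^{m}$; your version is equally valid and slightly more self-contained.
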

\begin{proof}
Using Maple, one may verify that 
\[
\left[\frac{\partial^{2}}{\partial t^{2}}F^{\plt}(x,t)\right]_{t=0}=\frac{2(1-x)x^{4}}{(1-2x)^{3}}
\]
and $(1-x)x/(1-2x)^{3}$ is known to be the generating function for
the sequence $(n(n+3)2^{n-3})_{n\geq1}$ (see \cite[A001793]{oeis}).
Then, 
\begin{align*}
[x^{n}t^{2}]\,F^{\plt}(x,t) & =[x^{n}]\,\frac{1}{2}\left[\frac{\partial^{2}}{\partial t^{2}}F^{\plt}(x,t)\right]_{t=0}\\
 & =[x^{n-3}]\,\frac{(1-x)x}{(1-2x)^{3}}\\
 & =(n-3)n2^{n-6}.\tag*{\qedhere}
\end{align*}
\end{proof}
Hence, Motzkin paths with exactly 1 plateau and those with exactly
2 plateaus are equinumerous with many other combinatorial objects
(see \cite[A000225 and A001793]{oeis}).

Drake and Gantner \cite[Section 4]{Drake2012} showed how one can
find continued fraction formulas for variations of these results,
including bivariate generating functions for counting Motzkin paths
by plateaus occurring only at certain heights, and with restrictions
on the lengths of plateaus. Their approach involved inserting appropriate
``correction terms'' at each level of the continued fraction formulas
that encode the types of plateaus that they wish to count.

All of these variations can also be computed using our method. To
disregard plateaus occurring at certain heights, we would delete the
corresponding rows from the cluster matrix, which is completely analogous
to Theorem \ref{t-ascrest} for ascents. To place restrictions on
the lengths of plateaus, we would alter the ``forbidden set'' $B$
appropriately and set the appropriate variables to 0. We leave the
details to the reader.

Our method also allows for an interpretation of Drake and Gantner's
correction terms in terms of clusters. Their correction terms are
of the form $x^{k}(t-1)$ for various $k$ and are then multiplied
by $x^{2}$, and these precisely correspond to the terms contributed
by the cluster matrix in our computations. This is a relatively simple
case because the only clusters formed by the words in $B=\{UD,UFD,UFFD,\dots\}$
are the words in $B$ themselves. Counting paths by subwords having
additional clusters would require more complicated correction terms
when working through the lens of Drake and Gantner.

\subsection{Counting Motzkin paths by peaks and valleys}

Peaks, or occurrences of $UD$, were introduced in the previous subsection.
Similarly, we define a \textit{valley} to be an occurrence of $DU$,
and $\val(\mu)$ the number of valleys of a Motzkin path $\mu$. Here
we find the joint distribution of peaks and valleys in Motzkin paths.
Let 
\[
F_{m}^{\mathrm{p,v}}(x,t_{1},t_{2})\coloneqq\sum_{n=0}^{\infty}\sum_{\mu\in\mathcal{M}_{n}^{m}}t_{1}^{\pk(\mu)}t_{2}^{\val(\mu)}x^{n}\qquad\mbox{and}\qquad F^{\mathrm{p,v}}(x,t_{1},t_{2})\coloneqq\sum_{n=0}^{\infty}\sum_{\mu\in\mathcal{M}_{n}}t_{1}^{\pk(\mu)}t_{2}^{\val(\mu)}x^{n}.
\]
Then we have the following theorem.
\begin{thm}
\label{t-pkval} Let $\{R_{m}^{\mathrm{p,v}}(x,t_{1},t_{2})\}_{m\geq0}$
be the sequence of rational functions defined by $R_{0}^{\mathrm{p,v}}(x,t_{1},t_{2})=1$,
$R_{1}^{\mathrm{p,v}}(x,t_{1},t_{2})=1-x-C_{2}$, and 
\[
R_{m}^{\mathrm{p,v}}(x,t_{1},t_{2})=(1-x-C_{1}-C_{2})R_{m-1}^{\mathrm{p,v}}(x,t_{1},t_{2})-(x+C_{3})^{2}R_{m-2}^{\mathrm{p,v}}(x,t_{1},t_{2})
\]
for $m\geq2$, where
\[
C_{1}=\frac{x^{2}(t_{1}-1)}{1-x^{2}(t_{1}-1)(t_{2}-1)},\;C_{2}=\frac{x^{2}(t_{2}-1)}{1-x^{2}(t_{1}-1)(t_{2}-1)},\;\mathrm{and}\;C_{3}=\frac{x^{3}(t_{1}-1)(t_{2}-1)}{1-x^{2}(t_{1}-1)(t_{2}-1)}.
\]
Then 
\begin{align*}
F_{m}^{\mathrm{p,v}}(x,t_{1},t_{2}) & =\frac{R_{m}^{\mathrm{p,v}}(x,t_{1},t_{2})}{(1-x-C_{1})R_{m}^{\mathrm{p,v}}(x,t_{1},t_{2})-(x+C_{3})^{2}R_{m-1}^{\mathrm{p,v}}(x,t_{1},t_{2})}\\
 & =\underset{\mathrm{depth}\mbox{ }m+1}{\underbrace{\frac{1}{1-x-C_{1}-}\:\frac{(x+C_{3})^{2}}{1-x-C_{1}-C_{2}-}\cdots\frac{(x+C_{3})^{2}}{1-x-C_{1}-C_{2}-}\:\frac{(x+C_{3})^{2}}{1-x-C_{2}}}}
\end{align*}
for $m\geq1$ and 
\begin{align*}
F^{\mathrm{p,v}}(x,t_{1},t_{2}) & =\frac{1}{1-x-C_{1}-}\:\frac{(x+C_{3})^{2}}{1-x-C_{1}-C_{2}-}\:\frac{(x+C_{3})^{2}}{1-x-C_{1}-C_{2}-}\cdots\\
 & =\frac{2}{1-x-C_{1}+C_{2}+\sqrt{(1-x-C_{1}-C_{2})^{2}-4(x+C_{3})^{2}}}.
\end{align*}
\end{thm}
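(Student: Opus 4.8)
The plan is to apply the monoid network cluster method (Theorem~\ref{t-gjcmmn}) to the Motzkin monoid network of order $m$ with forbidden set $B=\{UD,DU\}$, weighting each occurrence of $UD$ by $t_1$ (peaks) and each occurrence of $DU$ by $t_2$ (valleys). The essential new feature, absent from the proofs of Theorems~\ref{t-ascents} and~\ref{t-kplateau}, is that $UD$ and $DU$ overlap: a marked $UD$ and a marked $DU$ can share a single letter, so the clusters are no longer just the subwords themselves. I would first show that the clusters formed by $\{UD,DU\}$ are exactly the alternating words $UD,\,UDU,\,UDUD,\dots$ and $DU,\,DUD,\,DUDU,\dots$, each carrying its unique forced marking, since omitting any of the overlapping length-$2$ factors would disconnect the word into a nontrivial concatenation.

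Next I would compute the cluster matrix $\overrightarrow{L}_{\negmedspace G}$ by tracking where each alternating cluster sits in the network. Reading heights off the network, a cluster of even length starting with $U$ (with $k$ peaks and $k-1$ valleys) is a walk from a vertex $i$ back to $i$ and so lands on the diagonal, as does an even-length cluster starting with $D$ (with $k-1$ peaks and $k$ valleys); an odd-length cluster starting with $U$ is a walk from $i$ to $i+1$ and lands on the superdiagonal, and one starting with $D$ is a walk from $i$ to $i-1$ and lands on the subdiagonal. Applying the homomorphism $U,F,D\mapsto x$ and substituting $t_j\mapsto t_j-1$, the contributions are geometric series in $(t_1-1)(t_2-1)x^2$, which sum precisely to $C_1$ (from the $U$-started diagonal clusters), $C_2$ (from the $D$-started diagonal clusters), and $C_3$ (from the off-diagonal clusters, combined with the single step $-x$ to give $-(x+C_3)$). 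The upshot is that $A_m-\overrightarrow{L}_{\negmedspace G}(t_1-1,t_2-1)$ becomes, after the homomorphism, a tridiagonal matrix $M_m$ with off-diagonal entries $-(x+C_3)$ and diagonal entries $1-x-C_1-C_2$, except that vertex $0$ admits no valley and vertex $m$ admits no peak, so the top-left entry is $1-x-C_1$ and the bottom-right entry is $1-x-C_2$.

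From here the argument parallels the proof of Theorem~\ref{t-ascents}. By Cramer's rule, $F_m^{\mathrm{p,v}}$ is the $(0,0)$ entry of $M_m^{-1}$, equal to the ratio of the continuant of the block on vertices $\{1,\dots,m\}$ to $\det M_m$. Setting $R_m$ equal to that continuant, the tridiagonal recursion gives $R_m=(1-x-C_1-C_2)R_{m-1}-(x+C_3)^2R_{m-2}$ with $R_0=1$ and $R_1=1-x-C_2$ (the asymmetric initial data encoding the two distinct boundary vertices), while expanding $\det M_m$ along vertex $0$ yields $\det M_m=(1-x-C_1)R_m-(x+C_3)^2R_{m-1}$; this is exactly the claimed closed form for $F_m^{\mathrm{p,v}}$, and the same column operations produce the continued fraction of depth $m+1$. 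Finally I would let $m\to\infty$: the tail of the continued fraction (all of whose levels equal $1-x-C_1-C_2$) satisfies a self-similar quadratic, and solving it and substituting back into the top level $1-x-C_1$ gives the closed form for $F^{\mathrm{p,v}}$. I expect the main obstacle to be the cluster enumeration and the bookkeeping of cluster heights in the second step, since this is where the overlap of $UD$ and $DU$ forces the nontrivial denominators $1-(t_1-1)(t_2-1)x^2$ hidden inside $C_1,C_2,C_3$; once the tridiagonal matrix is in hand, the determinant recursion and the limiting argument are routine.
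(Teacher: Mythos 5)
Your proposal is correct and follows essentially the same route as the paper's proof: you identify the same alternating clusters $UDUD\cdots$ and $DUDU\cdots$ with their forced markings, sum them as geometric series to get the entries $C_1$, $C_2$, $C_3$ of the cluster matrix (with the same boundary asymmetry at vertices $0$ and $m$), and then use the same Cramer's rule, tridiagonal continuant recursion for $R_m$, and self-similar-tail argument for the $m\to\infty$ limit. The only cosmetic difference is that you obtain $\det M_m=(1-x-C_1)R_m-(x+C_3)^2R_{m-1}$ by cofactor expansion along vertex $0$ rather than the paper's column operations, which is the same algebra.
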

\begin{proof}
Set $B=\{UD,DU\}$. This time, we weight occurrences of $UD$ by $t_{1}$
and occurrences of $DU$ by $t_{2}$. However, finding the cluster
matrix is no longer a trivial task. We make the following observations:

\begin{itemize}
\item Clusters starting and ending at height 0 are of the form $UDUD\cdots UD$,
since a path cannot go down from height 0. We can decompose these
words into a sequence of $UD$s, where the first $UD$ contributes
a $t_{1}$ and each subsequent $UD$ contributes a $t_{1}$ and a
$t_{2}$.
\item Clusters starting and ending at height $m$ are of the form $DUDU\cdots DU$,
since a path cannot go up from height $m$. We can decompose these
words into a sequence of $DU$s, where the first $DU$ contributes
a $t_{2}$ and each subsequent $DU$ contributes a $t_{1}$ and a
$t_{2}$.
\item Clusters starting and ending at height $k$ with $0<k<m$ are of the
above two forms, since a path can go either up or down from height
$k$.
\item Clusters starting at height $k$ and ending at height $k+1$ are of
the form $UDUDU\cdots DU$, which can be decomposed into an initial
subword $UDU$\textemdash contributing a $t_{1}$ and a $t_{2}$\textemdash and
a sequence of $DU$s, each contributing a $t_{1}$ and a $t_{2}$.
\item Clusters starting at height $k$ and ending at height $k-1$ are of
the form $DUDUD\cdots UD$, which can be decomposed into an initial
subword $DUD$\textemdash contributing a $t_{1}$ and a $t_{2}$\textemdash and
a sequence of $UD$s, each contributing a $t_{1}$ and a $t_{2}$.
\end{itemize}
Thus, the $(m+1)\times(m+1)$ cluster matrix is 
\[
\overrightarrow{L}_{\negmedspace G}(t_{1},t_{2})=\begin{bmatrix}\hat{C_{1}} & \hat{C_{3}}\\
\hat{C_{4}} & \hat{C_{1}}+\hat{C_{2}} & \hat{C_{3}}\\
 & \hat{C_{4}} & \hat{C_{1}}+\hat{C_{2}} & \ddots\\
 &  & \ddots & \ddots & \ddots\\
 &  &  & \ddots & \hat{C_{1}}+\hat{C_{2}} & \hat{C_{3}}\\
 &  &  &  & \hat{C_{4}} & \hat{C_{2}}
\end{bmatrix}
\]
where 
\[
\hat{C_{1}}=\frac{UDt_{1}}{1-UDt_{1}t_{2}},\;\hat{C_{2}}=\frac{DUt_{2}}{1-DUt_{1}t_{2}},\;\hat{C_{3}}=\frac{UDUt_{1}t_{2}}{1-DUt_{1}t_{2}},\;\mathrm{and}\;\hat{C_{4}}=\frac{DUDt_{1}t_{2}}{1-UDt_{1}t_{2}}.
\]

By applying Theorem \ref{t-gjcmmn}, we see that $F_{m}^{\mathrm{p,v}}(x,t_{1},t_{2})$
is the $(1,1)$ entry of $M_{m}^{-1}$ where $M_{m}$ is the $(m+1)\times(m+1)$
matrix {\small{}
\[
M_{m}=\begin{bmatrix}1-x-C_{1} & -x-C_{3}\\
-x-C_{3} & 1-x-C_{1}-C_{2} & -x-C_{3}\\
 & -x-C_{3} & 1-x-C_{1}-C_{2} & \ddots\\
 &  & \ddots & \ddots & \ddots\\
 &  &  & \ddots & 1-x-C_{1}-C_{2} & -x-C_{3}\\
 &  &  &  & -x-C_{3} & 1-x-C_{2}
\end{bmatrix}
\]
}and $C_{1}$, $C_{2}$, and $C_{3}$ are defined in the statement
of this theorem. Then, 
\begin{align*}
F_{m}^{\mathrm{p,v}}(x,t_{1},t_{2})=\frac{\det M_{m}^{\prime}}{\det M_{m}} & =\frac{\det M_{m}^{\prime}}{\left(1-x-C_{1}-\frac{(x+C_{3})^{2}}{\left(\frac{\det M_{m}^{\prime}}{\det M_{m-1}^{\prime}}\right)}\right)\det M_{m}^{\prime}}\\
 & =\frac{\det M_{m}^{\prime}}{(1-x-C_{1})\det M_{m}^{\prime}-(x+C_{3})^{2}\det M_{m-1}^{\prime}}
\end{align*}
where $M_{m}^{\prime}$ is the matrix obtained from $M_{m}$ by deleting
the first row and the first column. The determinant of $M_{m}^{\prime}$
is equal to that of an upper-triangular matrix with diagonal entries
\[
u_{i,i}=\begin{cases}
1-x-C_{1}-C_{2}-\cfrac{(x+C_{3})^{2}}{u_{i+1,i+1}}, & \mbox{if }1\leq i\leq m\\
1-x-C_{2}, & \mbox{if }i=m+1,
\end{cases}
\]
so {\allowdisplaybreaks}
\begin{align*}
\det M_{m}^{\prime}=\prod_{i=1}^{m+1}u_{i,i} & =\left(1-x-C_{1}-C_{2}-\frac{(x+C_{3})^{2}}{\left(\frac{\det M_{m-1}^{\prime}}{\det M_{m-2}^{\prime}}\right)}\right)\det M_{m-1}^{\prime}\\
 & =(1-x-C_{1}-C_{2})\det M_{m-1}^{\prime}-(x+C_{3})^{2}\det M_{m-2}^{\prime}
\end{align*}
with initial conditions $\det M_{0}^{\prime}=1$ and $\det M_{1}^{\prime}=1-x-C_{2}$.
These are rational functions in $x$, $t_{1}$, and $t_{2}$; we write
$R_{m}^{\mathrm{p,v}}(x,t_{1},t_{2})=\det M_{m}^{\prime}$. Furthermore,

\[
\frac{\det M_{m}}{\det M_{m}^{\prime}}=\underset{\mathrm{depth}\mbox{ }m}{\underbrace{1-x-C_{1}-\frac{(x+C_{3})^{2}}{1-x-C_{1}-C_{2}-}\cdots\frac{(x+C_{3})^{2}}{1-x-C_{1}-C_{2}-}\:\frac{(x+C_{3})^{2}}{1-x-C_{2}}}},
\]
so 
\[
F_{m}^{\mathrm{p,v}}(x,t_{1},t_{2})=\underset{\mathrm{depth}\mbox{ }m+1}{\underbrace{\frac{1}{1-x-C_{1}-}\:\frac{(x+C_{3})^{2}}{1-x-C_{1}-C_{2}-}\cdots\frac{(x+C_{3})^{2}}{1-x-C_{1}-C_{2}-}\:\frac{(x+C_{3})^{2}}{1-x-C_{2}}}}.
\]
By taking the limit as $m\rightarrow\infty$, we have that 
\begin{align*}
F^{\mathrm{p,v}}(x,t_{1},t_{2}) & =\frac{1}{1-x-C_{1}-}\:\frac{(x+C_{3})^{2}}{1-x-C_{1}-C_{2}-}\:\frac{(x+C_{3})^{2}}{1-x-C_{1}-C_{2}-}\cdots\\
 & =\frac{1}{1-x-C_{1}-(x+C_{3})^{2}G(x,t_{1},t_{2})}
\end{align*}
where 
\begin{align*}
G(x,t_{1},t_{2}) & =\frac{1}{1-x-C_{1}-C_{2}}\:\frac{(x+C_{3})^{2}}{1-x-C_{1}-C_{2}-}\:\frac{(x+C_{3})^{2}}{1-x-C_{1}-C_{2}-}\cdots\\
 & =\frac{1}{1-x-C_{1}-C_{2}-(x+C_{3})^{2}G(x,t_{1},t_{2})}.
\end{align*}
Thus we have the functional equation 
\[
(x+C_{3})^{2}G(x,t_{1},t_{2})^{2}-(1-x-C_{1}-C_{2})G(x,t_{1},t_{2})+1=0,
\]
and solving it gives 
\[
G(x,t_{1},t_{2})=\frac{1-x-C_{1}-C_{2}\pm\sqrt{(1-x-C_{1}-C_{2})^{2}-4(x+C_{3})^{2}}}{2(x+C_{3})^{2}}.
\]
As before, one can verify that the solution given by the minus sign
is the correct one, and we conclude that 
\begin{align*}
F^{\mathrm{p,v}}(x,t_{1},t_{2}) & =\frac{1}{1-x-C_{1}-\frac{1}{2}\Big(1-x-C_{1}-C_{2}-\sqrt{(1-x-C_{1}-C_{2})^{2}-4(x+C_{3})^{2}}\Big)}\\
 & =\frac{2}{1-x-C_{1}+C_{2}+\sqrt{(1-x-C_{1}-C_{2})^{2}-4(x+C_{3})^{2}}}.\tag*{\qedhere}
\end{align*}
\end{proof}
The first several terms of $F^{\mathrm{p,v}}(x,t_{1},t_{2})$ are
the following:

\begin{center}
\begin{tabular}{c|c}
$n$ & $[x^{n}]\,F^{\mathrm{p,v}}(x,t_{1},t_{2})$\tabularnewline
\hline 
0 & $1$\tabularnewline
1 & $1$\tabularnewline
2 & $1+t_{1}$\tabularnewline
3 & $2+2t_{1}$\tabularnewline
4 & $4+4t_{1}+t_{1}^{2}t_{2}$\tabularnewline
5 & $8+8t_{1}+2t_{1}t_{2}+t_{1}^{2}+2t^{2}t_{2}$\tabularnewline
6 & $16+t_{2}+18t_{1}+6t_{1}t_{2}+3t_{1}^{2}+6t_{1}^{2}t_{2}+t_{1}^{3}t_{2}^{2}$\tabularnewline
7 & $33+4t_{2}+40t_{1}+18t_{1}t_{2}+9t_{1}^{2}+16t_{1}^{2}t_{2}+3t_{1}^{2}t_{2}^{2}+2t_{1}^{3}t_{2}+2t_{1}^{3}t_{2}^{2}$\tabularnewline
8 & $69+13t_{2}+90t_{1}+50t_{1}t_{2}+25t_{1}^{2}+3t_{1}t_{2}^{2}+47t_{1}^{2}t_{2}+t_{1}^{3}+9t_{1}^{2}t_{2}^{2}+6t_{1}^{3}t_{2}+9t_{1}^{3}t_{2}^{2}+t_{1}^{4}t_{2}^{3}$\tabularnewline
\end{tabular}
\par\end{center}

\noindent The constant coefficients, which count Motzkin paths with
no peaks and valleys, are in the OEIS \cite[A004149]{oeis}.

Liu, Ma, and Yeh \cite{Liu2008} gave recursive and continued fraction
formulas for counting Dyck paths with peaks avoiding a specified set
of heights and valleys avoiding another specified set of heights.
We can do the same thing by applying our cluster method to the monoid
network for Dyck paths, but here we give the analogous results for
Motzkin paths.\footnote{Liu, Ma, and Yeh defined the height of a peak (respectively, valley)
to be the height at which its down step (respectively, up step) occurs,
but we use the convention that the height of a peak or valley is the
height at which the corresponding subword ($UD$ or $DU$) begins.}
\begin{thm}
Let 
\[
F^{\mathrm{p,v}}(P,V;x)\coloneqq\sum_{n=0}^{\infty}c_{n}x^{n}
\]
where $c_{n}$ is the number of Motzkin paths of length $n$ with
every peak occuring at a height in $P\subseteq\mathbb{N}$ and every
valley occuring at a height in $V\subseteq\mathbb{P}$. Then, 
\[
F^{\mathrm{p,v}}(P,V;x)=\frac{1}{1-x+C_{1,0}-}\:\frac{(x+C_{3,0})^{2}}{1-x+C_{1,1}+C_{2,1}-}\:\frac{(x+C_{3,1})^{2}}{1-x+C_{1,2}+C_{2,2}-}\cdots
\]
where 
\[
\begin{array}{ccc}
C_{1,i}=\begin{cases}
\frac{x^{2}}{1-x^{2}}, & \mbox{if }i\notin P\mbox{ and }i+1\notin V\\
x^{2}, & \mbox{if }i\notin P\mbox{ and }i+1\in V\\
0, & \mbox{otherwise},
\end{cases} & \quad & C_{2,i}=\begin{cases}
\frac{x^{2}}{1-x^{2}}, & \mbox{if }i\notin V\mbox{ and }i-1\notin P\\
x^{2}, & \mbox{if }i\notin V\mbox{ and }i-1\in P\\
0, & \mbox{otherwise},
\end{cases}\end{array}
\]
and 
\[
C_{3,i}=\begin{cases}
\frac{x^{3}}{1-x^{2}}, & \mbox{if }i\notin P\mbox{ and }i+1\notin V\\
0, & \mbox{otherwise}.
\end{cases}
\]
\end{thm}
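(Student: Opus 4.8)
The plan is to prove this by combining the cluster computation behind Theorem~\ref{t-pkval} with the height-restriction device used in Theorem~\ref{t-ascrest}. As in Theorem~\ref{t-pkval}, I would work in the Motzkin monoid network of order $m$ with $B=\{UD,DU\}$, weighting each $UD$ by $t_1$ and each $DU$ by $t_2$. The new ingredient is that I only \emph{mark} the occurrences that are to be forbidden: a peak $UD$ beginning at height $i$ is marked precisely when $i\notin P$, and a valley $DU$ beginning at height $i$ is marked precisely when $i\notin V$. After assembling the cluster matrix from these marked (forbidden) occurrences alone, I would apply Theorem~\ref{t-gjcmmn}, send $U,D,F\mapsto x$, extract the $(1,1)$ entry of the inverse as a continued fraction, pass to the limit $m\to\infty$, and finally set $t_1=t_2=0$ to enforce the avoidance.

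The crux of the argument is recomputing the cluster matrix under this selective marking. The key structural fact is that a peak beginning at height $i$ and a valley beginning at height $i+1$ are the only pair of forbidden subwords that can overlap, so every cluster localizes to a single adjacent pair of heights $(i,i+1)$ and is an alternating word in $UD$ (a peak at $i$) and $DU$ (a valley at $i+1$). I would then split into cases according to which of the two heights is permissible. When both the peak at $i$ and the valley at $i+1$ are forbidden, the full alternating families survive, and the geometric resummations reproduce, after evaluating at the avoidance values, the ``generic'' entries $\tfrac{x^2}{1-x^2}$ and $\tfrac{x^3}{1-x^2}$; these give the top cases of $C_{1,i}$, $C_{2,i}$, and $C_{3,i}$. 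When exactly one of the two is permitted---say the valley at $i+1$ is allowed---the unmarked valley can no longer glue consecutive peaks together, so any alternating word of length exceeding two factors as a concatenation and thus fails to be a cluster; only the single marked peak $UD$ remains, contributing $x^2$ and forcing the cross contribution $C_{3,i}$ to vanish. When the peak at height $i$ is itself permitted, no marked subword both starts and ends inside the pair, so the corresponding $C_{1,i}$ and $C_{3,i}$ are $0$. Carrying out this case analysis symmetrically for the valley-initiated families yields exactly the piecewise definitions of $C_{1,i}$, $C_{2,i}$, and $C_{3,i}$ in the statement.

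With the cluster matrix in hand, the remaining steps are mechanical and parallel the end of the proof of Theorem~\ref{t-pkval}. The matrix $M_m$ to be inverted is tridiagonal with diagonal entries $1-x+C_{1,i}+C_{2,i}$ (with the boundary entry at height $0$ omitting $C_{2,0}$, since a valley cannot begin at height $0$) and off-diagonal entries $-(x+C_{3,i})$ in the super- and sub-diagonal positions linking heights $i$ and $i+1$. Column-addition operations reduce $M_m$ to upper-triangular form exactly as before, and Cramer's rule turns the $(1,1)$ entry of $M_m^{-1}$ into the finite continued fraction whose successive numerators are the products $(x+C_{3,i})^2$ of the two off-diagonal entries. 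Passing to the formal power series limit $m\to\infty$ then gives the stated infinite continued fraction.

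I expect the main obstacle to be the cluster bookkeeping in the second paragraph: one must verify carefully that each partially-marked alternating word genuinely decomposes as a concatenation (and hence is not a cluster) in the mixed cases, and that the three evaluated generating functions collapse to the correct piecewise values. Everything else reuses machinery already established for Theorems~\ref{t-pkval} and~\ref{t-ascrest}.
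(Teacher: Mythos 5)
Your proposal is correct and follows essentially the same route as the paper's proof: the same Motzkin monoid network with $B=\{UD,DU\}$, the same device of marking only the forbidden (height-restricted) occurrences, the same case analysis showing that clusters are alternating words confined to an adjacent height pair $(i,i+1)$ and that unmarked peaks/valleys break longer alternating words into concatenations (so only $UD$ or $DU$ alone survives in the mixed cases, and the off-diagonal contributions vanish), followed by the same Cramer's-rule/tridiagonal reduction, limit $m\to\infty$, and evaluation at $0$. The only cosmetic difference is your use of two indeterminates $t_1,t_2$ where the paper uses a single $t$, which changes nothing in the argument.
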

\begin{proof}
We weight both $UD$ and $DU$ by $t$, but we only wish to consider
instances of $UD$ at heights $i\notin P$ and instances of $DU$
at heights $i\notin V$. We claim that the cluster matrix is 
\[
\overrightarrow{L}_{\negmedspace G}(t)=\begin{bmatrix}\hat{C}_{1,0} & \hat{C}_{3,0}\\
\hat{C}_{4,1} & \hat{C}_{1,1}+\hat{C}_{2,1} & \hat{C}_{3,1}\\
 & \hat{C}_{4,2} & \hat{C}_{1,2}+\hat{C}_{2,2} & \ddots\\
 &  & \ddots & \ddots & \ddots\\
 &  &  & \ddots & \hat{C}_{1,m-1}+\hat{C}_{2,m-1} & \hat{C}_{3,m-1}\\
 &  &  &  & \hat{C}_{4,m} & \hat{C}_{2,m}
\end{bmatrix}
\]
where

\[
\begin{array}{ccc}
\hat{C}_{1,i}=\begin{cases}
\frac{UD(t-1)}{1-UD(t-1)^{2}}, & \mbox{if }i\notin P\mbox{, }i+1\notin V\\
UD(t-1), & \mbox{if }i\notin P\mbox{, }i+1\in V\\
0, & \mbox{otherwise},
\end{cases} & \quad & \hat{C}_{2,i}=\begin{cases}
\frac{DU(t-1)}{1-UD(t-1)^{2}}, & \mbox{if }i\notin V\mbox{, }i-1\notin P\\
DU(t-1), & \mbox{if }i\notin V\mbox{, }i-1\in P\\
0, & \mbox{otherwise},
\end{cases}\\
\vphantom{\frac{dy}{dx}} &  & \vphantom{\frac{dy}{dx}}\\
\hat{C}_{3,i}=\begin{cases}
\frac{UDU(t-1)}{1-DU(t-1)^{2}}, & \mbox{if }i\notin P\mbox{, }i+1\notin V\\
0, & \mbox{otherwise},
\end{cases} & \quad & \hat{C}_{4,i}=\begin{cases}
\frac{DUD(t-1)}{1-UD(t-1)^{2}}, & \mbox{if }i\notin V\mbox{, }i-1\notin P\\
0, & \mbox{otherwise}.
\end{cases}
\end{array}
\]

For example, $\hat{C}_{1,i}$ gives clusters starting and ending at
height $i$ and beginning with an up step. Every such cluster begins
with a peak, so if $i\in P$, then $\hat{C}_{1,i}=0$. Otherwise,
$i\notin P$, and if $i+1\in V$, then the only possible such cluster
is $UD$ because all other possible clusters begin with $UD$ and
are followed by a valley at height $i+1$. However, if $i\notin P\mbox{ and }i+1\notin V$,
then every subword of the form $UDUD\cdots$ is a valid cluster. One
can verify the formulas for $\hat{C}_{2,i},\hat{C}_{3,i},\hat{C}_{4,i}$
using similar reasoning, and the result follows from the same process
as before.
\end{proof}
Below are the generating functions for Motzkin paths with parity restrictions
on the heights of peaks and valleys:

\begin{align*}
F^{\mathrm{p,v}}(\mathbb{O},\mathbb{E}_{\geq0};x) & =\frac{1}{1-x+\frac{x^{2}}{1-x^{2}}-}\:\frac{(x+\frac{x^{3}}{1-x^{2}})^{2}}{1-x+\frac{x^{2}}{1-x^{2}}-}\:\frac{x^{2}}{1-x+\frac{x^{2}}{1-x^{2}}-}\\
 & \qquad\qquad\qquad\qquad\qquad\qquad\qquad\frac{(x+\frac{x^{3}}{1-x^{2}})^{2}}{1-x+\frac{x^{2}}{1-x^{2}}-}\:\frac{x^{2}}{1-x+\frac{x^{2}}{1-x^{2}}-}\cdots\\
 & =\frac{1-2x+2x^{2}-2x^{4}-\sqrt{1-4x+4x^{2}-4x^{4}}}{2x^{2}(1-x+x^{3})}\\
 & =1+x+x^{2}+2x^{3}+5x^{4}+12x^{5}+27x^{6}+60x^{7}+136x^{8}+\cdots
\end{align*}
\begin{align*}
F^{\mathrm{p,v}}(\mathbb{E}_{\geq0},\mathbb{O};x) & =\frac{1}{1-x-}\:\frac{x^{2}}{1-x+\frac{x^{2}}{1-x^{2}}-}\:\frac{(x+\frac{x^{3}}{1-x^{2}})^{2}}{1-x+\frac{x^{2}}{1-x^{2}}-}\\
 & \qquad\qquad\qquad\qquad\qquad\qquad\qquad\frac{x^{2}}{1-x+\frac{x^{2}}{1-x^{2}}-}\:\frac{(x+\frac{x^{3}}{1-x^{2}})^{2}}{1-x+\frac{x^{2}}{1-x^{2}}-}\cdots\\
 & =\frac{2(1-x+x^{3})}{1-2x+2x^{3}+\sqrt{1-4x+4x^{2}-4x^{4}}}\\
 & =1+x+2x^{2}+4x^{3}+8x^{4}+17x^{5}+38x^{6}+88x^{7}+208x^{8}+\cdots
\end{align*}

\begin{align*}
F^{\mathrm{p,v}}(\mathbb{O},\mathbb{O};x) & =\frac{1}{1-x+x^{2}-}\:\frac{x^{2}}{1-x-}\:\frac{x^{2}}{1-x+2x^{2}-}\:\frac{x^{2}}{1-x-}\:\frac{x^{2}}{1-x+2x^{2}-}\cdots\\
 & =\frac{2(1-x)}{1-2x+x^{2}+\sqrt{1-4x+6x^{2}-8x^{3}+5x^{4}-4x^{5}+4x^{6}}}\\
 & =1+x+x^{2}+2x^{3}+5x^{4}+12x^{5}+27x^{6}+60x^{7}+137x^{8}+\cdots
\end{align*}

\begin{align*}
F^{\mathrm{p,v}}(\mathbb{E}_{\geq0},\mathbb{E}_{\geq0};x) & =\frac{1}{1-x-}\:\frac{x^{2}}{1-x+2x^{2}-}\:\frac{x^{2}}{1-x-}\:\frac{x^{2}}{1-x+2x^{2}-}\:\frac{x^{2}}{1-x-}\cdots\\
 & =\frac{1-2x+3x^{2}-2x^{3}-\sqrt{1-4x+6x^{2}-8x^{3}+5x^{4}-4x^{5}+4x^{6}}}{2x^{2}(1-x)}\\
 & =1+x+2x^{2}+4x^{3}+7x^{4}+13x^{5}+27x^{6}+59x^{7}+131x^{8}+\cdots
\end{align*}
We note that the list of coefficients of $F^{\mathrm{p,v}}(\mathbb{E}_{\geq0},\mathbb{O};x)$
in particular is a shifted version of the OEIS sequence \cite[A025276]{oeis},
which can be verified by comparing generating functions.\\

\noindent \textbf{Acknowledgements.} The author thanks Ira Gessel
and Jordan Tirrell for reading earlier versions of the manuscript
and providing helpful suggestions; Cyril Banderier for several generous
discussions pertaining to this project at the 8th International Conference
on Lattice Path Combinatorics \& Applications; and an anonymous referee
for their constructive comments and suggestions.

\noindent \bibliographystyle{plain}
\addcontentsline{toc}{section}{\refname}\bibliography{bibliography}

\begin{thebibliography}{10}

\bibitem{Bassino2012}
Frédérique Bassino, Julien Clément, and Pierre Nicodème.
\newblock Counting occurrences for a finite set of words: combinatorial
  methods.
\newblock {\em ACM Trans. Algorithms}, 8(3):Article 31, 28, 2012.

\bibitem{Berstel1988}
Jean Berstel and Christophe Reutenauer.
\newblock {\em Rational Series and Their Languages}, volume~12 of {\em EATCS
  Monographs on Theoretical Computer Science}.
\newblock Springer-Verlag, Berlin, 1988.

\bibitem{Carlitz1976}
L.~Carlitz, Richard Scoville, and Theresa Vaughan.
\newblock Enumeration of pairs of sequences by rises, falls and levels.
\newblock {\em Manuscripta Math.}, 19(3):211--243, 1976.

\bibitem{Drake2012}
Dan Drake and Ryan Gantner.
\newblock Generating functions for plateaus in {M}otzkin paths.
\newblock {\em J. Chungcheong Math. Soc.}, 25(3):475--489, 2012.

\bibitem{Edlin2000}
Anne~E. Edlin and Doron Zeilberger.
\newblock The {G}oulden-{J}ackson cluster method for cyclic words.
\newblock {\em Adv. in Appl. Math.}, 25(2):228--232, 2000.

\bibitem{Flajolet1980}
P.~Flajolet.
\newblock Combinatorial aspects of continued fractions.
\newblock {\em Discrete Math.}, 32(2):125--161, 1980.

\bibitem{Flajolet2009}
Philippe Flajolet and Robert Sedgewick.
\newblock {\em Analytic {C}ombinatorics}.
\newblock Cambridge University Press, Cambridge, 2009.

\bibitem{Froeberg1975}
Ralph Fröberg.
\newblock Determination of a class of {P}oincaré series.
\newblock {\em Math. Scand.}, 37(1):29--39, 1975.

\bibitem{gessel-thesis}
Ira~Martin Gessel.
\newblock {\em Generating {F}unctions and {E}numeration of {S}equences}.
\newblock PhD thesis, Massachusetts Institute of Technology, 1977.

\bibitem{Goulden1979}
I.~P. Goulden and D.~M. Jackson.
\newblock An inversion theorem for cluster decompositions of sequences with
  distinguished subsequences.
\newblock {\em J. London Math. Soc. (2)}, 20(3):567--576, 1979.

\bibitem{Kong2005}
Yong Kong.
\newblock Extension of {G}oulden-{J}ackson cluster method on pattern
  occurrences in random sequences and comparison with {R}égnier-{S}zpankowski
  method.
\newblock {\em J. Difference Equ. Appl.}, 11(15):1265--1271, 2005.

\bibitem{Krattenthaler2015}
Christian Krattenthaler.
\newblock Lattice {P}ath {E}numeration.
\newblock In Miklós Bóna, editor, {\em Handbook of Enumerative Combinatorics},
  chapter~10, pages 589--678. CRC Press, 2015.

\bibitem{Kupin2010}
Elizabeth~J. Kupin and Debbie~S. Yuster.
\newblock Generalizations of the {G}oulden-{J}ackson cluster method.
\newblock {\em J. Difference Equ. Appl.}, 16(12):1463--1480, 2010.

\bibitem{Liu2008}
Shu-Chung Liu, Jun Ma, and Yeong-Nan Yeh.
\newblock Dyck paths and peak- and valley-avoiding sets.
\newblock {\em Stud. Appl. Math.}, 121(3):263--289, 2008.

\bibitem{Noonan1999}
John Noonan and Doron Zeilberger.
\newblock The {G}oulden-{J}ackson cluster method: extensions, applications and
  implementations.
\newblock {\em J. Differ. Equations Appl.}, 5(4-5):355--377, 1999.

\bibitem{Salomaa1978}
Arto Salomaa and Matti Soittola.
\newblock {\em Automata-Theoretic Aspects of Formal Power Series}.
\newblock Springer-Verlag, New York-Heidelberg, 1978.
\newblock Texts and Monographs in Computer Science.

\bibitem{oeis}
N.~J.~A. Sloane.
\newblock The {O}n-{L}ine {E}ncyclopedia of {I}nteger {S}equences.
\newblock Published electronically at \url{http://oeis.org}, 2015.

\bibitem{Stanley2011}
Richard~P. Stanley.
\newblock {\em Enumerative {C}ombinatorics}, volume~1.
\newblock Cambridge University Press, Cambridge, 2nd edition, 2011.

\bibitem{Wang2011}
Chao-Jen Wang.
\newblock {\em Applications of the {G}oulden-{J}ackson cluster method to
  counting {D}yck paths by occurrences of subwords}.
\newblock PhD thesis, Brandeis University, 2011.

\bibitem{Wen2005}
Xiangdong Wen.
\newblock The symbolic {G}oulden-{J}ackson cluster method.
\newblock {\em J. Difference Equ. Appl.}, 11(2):173--179, 2005.

\bibitem{Zeilberger2002}
Doron Zeilberger.
\newblock The umbral transfer-matrix method. {V}. {T}he {G}oulden-{J}ackson
  cluster method for infinitely many mistakes.
\newblock {\em Integers}, 2:Paper A5, 12, 2002.

\end{thebibliography}

\end{document}